\documentclass{amsart}


\usepackage{amsmath,amssymb,amsthm,amsfonts,verbatim,color,enumerate}
\usepackage{microtype}
\usepackage[all,2cell]{xy}
\usepackage{mathtools}
\usepackage{graphicx}
\usepackage{hyperref}
\usepackage{mathrsfs,pinlabel}

\CompileMatrices

\usepackage[top=1.3in,bottom=1.9in,left=1.4in,right=1.4in]{geometry}


\newtheorem{thm}{Theorem}
\newtheorem{lem}[thm]{Lemma}

\theoremstyle{definition} 

\newtheorem*{qu*}{Question}
 
\theoremstyle{remark} 
\newtheorem{rmk}[thm]{Remark}
\numberwithin{equation}{section} 

\newenvironment{defn}[1][Definition.]{\begin{trivlist}
\item[\hskip \labelsep {\bfseries #1}]}{\end{trivlist}}

 

\newcommand{\z}{\mathbb{Z}}

\newcommand{\ga}{\gamma}

\newcommand{\Ga}{\Gamma}
\newcommand{\what}{\widehat}\newcommand{\wtil}{\widetilde}

\newcommand{\cd}{\cdots}\newcommand{\ld}{\ldots}
\newcommand{\sbs}{\subset}\newcommand{\bs}{\backslash}\newcommand{\pa}{\partial}\newcommand{\es}{\emptyset}

\newcommand{\xra}{\xrightarrow}

\newcommand{\ra}{\rightarrow}
\newcommand{\hra}{\hookrightarrow}

\newcommand{\bb}[1]{\mathbb{#1}}

\newcommand{\ov}[1]{\overline{#1}}

\newcommand{\scr}{\mathscr}



\newcommand{\lan}{\langle}\newcommand{\ran}{\rangle}
\newcommand{\til}{\tilde}
\newcommand{\ti}{\times}

\newcommand{\rest}[2]{#1\bigr\vert_{#2}}\newcommand{\hy}{\bb H}\newcommand{\SO}{\text{SO}}\newcommand{\Lk}{\text{Lk}}

\usepackage{subfig}
\addtocontents{toc}{\protect\setcounter{tocdepth}{1}}

\begin{document}

\title{Hyperbolic groups with boundary an $n$-dimensional Sierpinski space}

\author{Jean-Fran\c{c}ois Lafont}
\address{Department of Mathematics, Ohio State University, Columbus, OH 43210}
\email{jlafont@math.ohio-state.edu}

\author{Bena Tshishiku}
\address{Department of Mathematics, University of Chicago, Chicago, IL 60615} 
\email{tshishikub@math.uchicago.edu}

\subjclass[2010]{Primary 20F65, 20F67; Secondary 57P10, 57R67 }

\date{\today}

\keywords{Geometric group theory, hyperbolic groups, Sierpinski curves, aspherical manifolds, surgery theory} 

\begin{abstract}
For $n\ge7$, we show that if $G$ is a torsion-free hyperbolic group whose visual boundary $\pa_\infty G\simeq\scr{S}^{n-2}$ is an $(n-2)$-dimensional Sierpinski space, then $G=\pi_1(W)$ for some aspherical $n$-manifold $W$ with nonempty boundary. Concerning the converse, we construct, for each $n\geq 4$, examples of aspherical manifolds with boundary, whose fundamental group $G$ is hyperbolic, but with visual boundary $\pa_\infty G$ {\it not} homeomorphic to
$\scr{S}^{n-2}$. 
\end{abstract}

\maketitle

\section{Introduction}

One of the basic invariants for a hyperbolic group is its boundary at infinity, and a fundamental question is to determine what properties of the group are captured by the topology of the boundary at infinity. For example, the famous {\it Cannon conjecture} postulates that a hyperbolic group whose boundary at infinity is the $2$-sphere $S^2$ must support a properly discontinuous, isometric, cocompact action on hyperbolic $3$-space $\mathbb H^3$. 

In \cite{kk}, Kapovich and Kleiner study groups whose boundary at infinity is a Sierpinski carpet -- a boundary version of the Cannon conjecture. In \cite{blw}, Bartels, L\"uck, and Weinberger study groups whose boundary at infinity is a sphere $S^n$ of dimension
$n\geq 5$ -- a high-dimensional version of the Cannon conjecture. In this paper, we consider groups whose boundary at infinity
are high-dimensional Sierpinski spaces -- thus lying somewhere between the work of Kapovich-Kleiner and that of Bartels-L\"uck-Weinberger.

The two main theorems are as follows. Let $\scr{S}^{n-2}$ denote an $(n-2)$-dimensional \emph{Sierpinski space}. See Section \ref{sec:sierpinski} for details.

\begin{thm}\label{thm:main}
Fix $n\ge7$ and let $G$ be a torsion-free hyperbolic group. If the visual boundary $\pa_\infty G$ is homeomorphic to $\scr S^{n-2}$, then there exists an $n$-dimensional compact aspherical topological manifold $W$ with nonempty boundary such that $\pi_1(W)\cong G$. 
\end{thm}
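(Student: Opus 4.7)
Since $\pa_\infty G\cong\scr S^{n-2}$ has topological dimension $n-2$, the Bestvina-Mess formulas give $\mathrm{cd}(G)=n-1$. My strategy is to extract a peripheral structure on $G$ from the Sierpinski boundary, to identify the peripheral subgroups as fundamental groups of closed aspherical $(n-1)$-manifolds via Bartels-L\"uck-Weinberger, to exhibit $(G,\{H_i\})$ as a Poincar\'e duality pair of formal dimension $n$, and then to realize this pair as an honest aspherical $n$-manifold $W$ with nonempty boundary by topological surgery. The lower bound $n\geq 7$ enters precisely to make the peripheral spheres have dimension at least $5$, so that BLW is directly applicable to the peripheral subgroups.

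\textbf{Peripheral subgroups.} Realize $\scr S^{n-2}\subset S^{n-1}$ as the complement of a countable dense union of pairwise disjoint open $(n-1)$-disks $\{D_\alpha\}$, with peripheral $(n-2)$-spheres $\sigma_\alpha:=\pa\overline{D_\alpha}$. These peripheral spheres are intrinsically characterized inside $\scr S^{n-2}$ (e.g.\ as the tamely embedded $(n-2)$-spheres whose small neighborhoods locally separate $\scr S^{n-2}$), so the $G$-action on $\pa_\infty G$ permutes them. A quasiconvexity/convergence-group argument then produces finitely many $G$-orbits and identifies each stabilizer $H_\alpha:=\stab_G(\sigma_\alpha)$ as a quasiconvex hyperbolic subgroup of $G$ with $\pa_\infty H_\alpha\cong S^{n-2}$. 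Since $n-2\geq 5$, the Bartels-L\"uck-Weinberger theorem applied to each orbit representative $H_i$ yields a closed aspherical $(n-1)$-manifold $N_i$ with $\pi_1(N_i)\cong H_i$.

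\textbf{Poincar\'e duality pair.} Via the Bestvina-Mess isomorphism $H^k(G;\z G)\cong\check H^{k-1}(\pa_\infty G;\z)$ together with a computation of the \v Cech cohomology of $\scr S^{n-2}$ (nontrivial only in degrees $0$ and $n-2$, the $(n-2)$-cohomology being a countably generated free abelian group whose generators correspond to the peripheral spheres), I would show that $(G,\{H_1,\ldots,H_k\})$ is a Poincar\'e duality pair of formal dimension $n$. The peripheral subgroups $H_i$, which are themselves $\mathrm{PD}_{n-1}$-groups by the previous step, supply exactly the boundary contribution to the relative duality.

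\textbf{Surgery realization and main obstacle.} The final step is to realize this PD pair, together with the manifold models $N_i$ for its boundary pieces, as a compact aspherical $n$-manifold $W$ with $\pi_1(W)\cong G$ and $\pa W$ a disjoint union of copies of the $N_i$. Topological surgery theory in dimension $n\geq 7$ is applicable, with obstructions governed by the assembly maps in the Farrell-Jones conjecture, which is known for torsion-free hyperbolic groups. However, executing the relative (pair) surgery problem is significantly more delicate than the closed-manifold case treated by Bartels-L\"uck-Weinberger; this is the principal technical obstacle. A natural strategy is a doubling/reflection trick: form the double of the PD pair along its peripheral subgroups (at the level of PD complexes) to obtain a closed $\mathrm{PD}_n$-group $\Gamma$ that is again hyperbolic and has visual boundary $S^{n-1}$ (the peripheral $D_\alpha$'s are filled in by disks from the second copy); apply BLW to $\Gamma$ to produce a closed aspherical $n$-manifold $V$ with $\pi_1(V)\cong\Gamma$; and then show that $V$ is invariant under the involution coming from the reflection, so that $W$ may be recovered as (a neighborhood of) one half of $V$.
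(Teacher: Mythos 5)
Your Steps 1 and 2 (peripheral structure, PD pair, and applying Bartels--L\"uck--Weinberger to the peripheral subgroups) parallel the paper's argument; the paper cites Kapovich--Kleiner \cite{kk} directly for the finiteness of peripheral conjugacy classes, the identification $\pa_\infty H_i\cong S^{n-2}$, and the PD$(n)$-pair statement, whereas you sketch a rederivation via Bestvina--Mess and \v{C}ech cohomology of $\scr S^{n-2}$. That part is fine.

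The real divergence --- and a genuine gap --- is in your final surgery step. The doubling/reflection trick you propose does not go through as stated. Two distinct problems: (i) It is not automatic that the abstract double $\Gamma = G *_{\{H_i\}} G$ is hyperbolic with visual boundary $S^{n-1}$; you would need a Bestvina--Feighn-type combination theorem plus a separate argument computing $\pa_\infty\Gamma$, and you do not address either. (ii) Even granting that, producing $W$ as ``one half of $V$'' requires promoting the abstract reflection $\tau\in\mathrm{Out}(\Gamma)$ to an honest topological involution of the closed manifold $V$ supplied by BLW, with locally flat fixed set $\cong\pa W$. BLW's construction carries no equivariance, and the Borel rigidity available from Farrell--Jones produces a self-homeomorphism $h\simeq\tau$ but does not make $h$ an involution, nor does it control the local structure of $\mathrm{Fix}(h)$. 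Equivariant rigidity is a strictly harder problem than the non-equivariant Borel conjecture and can fail even when Borel holds; moreover, fixed sets of topological involutions need not be locally flat. So this route, as written, does not close the argument.

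The paper instead attacks the relative surgery problem head-on using Ranicki's total surgery obstruction for pairs. Having produced a finite CW pair $(K,N)$ homotopy equivalent to $(X,Y)$ with $N$ a closed manifold, one must show $\bb S_n(K)=0$. The paper does this by a chain of identifications: the $4$-periodic structure group $\ov{\bb S}_n(K)$ agrees with the quadratic structure group $\bb S_n(\z,K)$ (Ranicki, using $\dim K\ge 6$); the latter vanishes by the Farrell--Jones isomorphisms for torsion-free hyperbolic $G$ (exactly as in the BLW closed case); and then the exact sequence $H_n(K;L_0(\z))\to \bb S_n(K)\to \ov{\bb S}_n(K)$ reduces everything to showing $H_n(K;\z)=0$, which follows from the long exact sequence of $(K,N)$ since $\pa[K]\neq 0$ in $H_{n-1}(N;\z)$. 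If you want to salvage your outline, replace the doubling step with this relative total-surgery-obstruction computation; the doubling idea would require substantial additional input (a combination theorem for the group, a computation of $\pa_\infty\Gamma$, and an equivariant rigidity theorem) that is not available off the shelf.
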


Note that the fundamental group $\pi$ of a closed aspherical manifold $M$ is an example of a Poincar\'e duality group. Whether or not all finitely presented Poincar\'e duality groups arise in this fashion is an open problem that goes back to Wall \cite{wall_problems}. Theorem \ref{thm:main} addresses a relative version of this problem for a special class of groups.

Our second result shows that the converse of Theorem \ref{thm:main} is false:

\begin{thm}\label{thm:converse}
For each $n\geq 4$, there exists a compact aspherical manifold $M^n$ with nonempty boundary such that $G=\pi_1(M)$ is a hyperbolic group, but $\pa_\infty G$ is {\bf not} homeomorphic to $\scr S^{n-2}$. 
\end{thm}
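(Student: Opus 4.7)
The plan is to exhibit the simplest possible family of counterexamples: a closed hyperbolic manifold crossed with an interval. Fix $n \ge 4$ and let $N^{n-1}$ be any closed hyperbolic manifold of dimension $n-1$; such manifolds exist in every dimension $n-1 \ge 3$, e.g.\ as arithmetic quotients of $\hy^{n-1}$ via Borel's construction (and for $n = 4$ also by standard examples such as hyperbolic Dehn fillings of cusped $3$-manifolds). Define
\[
M^n := N^{n-1} \times [0,1].
\]
Then $M$ is a compact $n$-manifold with $\pa M = N \sqcup N \ne \es$, and since its universal cover $\hy^{n-1} \times [0,1]$ is contractible, $M$ is aspherical. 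This takes care of the manifold-theoretic hypotheses.

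Next I would verify the group-theoretic hypotheses. The projection $M \to N$ is a homotopy equivalence, so
\[
G := \pi_1(M) \cong \pi_1(N),
\]
a torsion-free uniform lattice in $\isom(\hy^{n-1})$. Such a lattice is word-hyperbolic, with visual boundary
\[
\pa_\infty G \;\cong\; \pa_\infty \hy^{n-1} \;\cong\; S^{n-2}.
\]

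Finally I would check that $S^{n-2} \not\cong \scr S^{n-2}$. Using the description from Section \ref{sec:sierpinski}, $\scr S^{n-2}$ arises by removing a countable dense family of disjoint open round $(n-1)$-balls with diameters tending to zero from $S^{n-1}$. Every neighborhood of every point of $\scr S^{n-2}$ therefore meets infinitely many of the peripheral $(n-2)$-spheres bounding the removed balls, so $\scr S^{n-2}$ is not locally Euclidean; in particular it is not a topological manifold. Since $S^{n-2}$ is a closed manifold, the two spaces cannot be homeomorphic.

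This construction presents essentially no obstacle: the only classical fact invoked is the existence of closed hyperbolic manifolds in every dimension $\ge 3$. The point of Theorem \ref{thm:converse} is thus not the difficulty of producing $M$, but rather to show that hyperbolicity of $\pi_1$ together with asphericity of $M$ is too weak to force $\pa_\infty G \cong \scr S^{n-2}$, so that the hypothesis on the visual boundary in Theorem \ref{thm:main} is genuinely needed; the above $M = N^{n-1} \times [0,1]$ is ``lower-dimensional'' in the sense that its interior is a tubular thickening of an $(n-1)$-manifold, and one should expect more elaborate examples (e.g.\ Davis-type constructions) to yield further obstructions of a similar flavor.
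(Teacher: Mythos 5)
Your proof is correct as a proof of the theorem as literally stated, but it takes an entirely different and substantially more elementary route than the paper. The paper applies Charney--Davis strict hyperbolization to a non-PL triangulation of $X \# \Sigma^2 H$ (with $H$ a homology sphere, $\pi_1(H)\neq 1$); after a capping construction, the Davis--Januszkiewicz link analysis along a singular geodesic $\gamma$ shows that $\pi_1\big(\partial_\infty G \setminus \{\gamma_+,\gamma_-\}\big) \neq 1$, whereas $\scr S^{n-2}$ minus finitely many points is simply connected (Lemma~\ref{fund-grp-Sierpinski}). Your example $M = N^{n-1}\times[0,1]$, with $N$ a closed hyperbolic manifold, disproves the converse for the trivial reason that $\partial_\infty G\cong S^{n-2}$ is a closed manifold, whereas $\scr S^{n-2}$ is not. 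Your step ``every neighborhood meets infinitely many peripheral spheres, so $\scr S^{n-2}$ is not locally Euclidean'' should be replaced by the sharper observation: each peripheral sphere $\partial D_i\cong S^{n-2}$ is a proper compact subset of the connected space $\scr S^{n-2}$, and an $(n-2)$-manifold cannot contain an embedded closed $(n-2)$-manifold as a proper subset (by invariance of domain such a submanifold would be open and closed, hence a full component). With that repair the argument is watertight.

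What is lost relative to the paper: your example is degenerate in two ways. The inclusion $\partial M \hookrightarrow M$ is a homotopy equivalence, so the boundary carries no genuine peripheral structure; and $\partial_\infty G$ fails to be $\scr S^{n-2}$ only by virtue of being a manifold at all. The paper's construction produces examples where $\partial_\infty G$ is neither a sphere nor a Sierpinski space (it has non-simply-connected complement of two points), showing that even aspherical manifolds-with-boundary built to mimic negatively curved manifolds with totally geodesic boundary---precisely the setting where one might naively expect a Sierpinski boundary by Theorem~\ref{thm:lafont}---can still have exotic visual boundary. That is the phenomenon the paper is documenting; your product example shows only that the hypotheses of the theorem are far too weak to pin down even the local homeomorphism type of $\partial_\infty G$.
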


\vskip 10 pt 

\noindent {\it Structure of paper.} In Section \ref{sec:sierpinski} we recall the definition of an $n$-dimensional Sierpinski space. In Sections \ref{sec:main} and \ref{sec:converse}, we prove Theorems \ref{thm:main} and \ref{thm:converse}, respectively. In Section \ref{sec:CAT0}, we remark on a generalization of Theorem \ref{thm:main} to CAT(0) groups. 

\vskip 10pt 

\noindent{\it Acknowledgments.} The authors wish to thank S.\ Weinberger and S.\ Ferry for useful conversations about surgery theory, and M. Davis for helpful discussions on approximating cellular maps. The authors thank B.\ Farb for comments on a draft of this paper. The first author was partially supported by the NSF, under grant DMS-1207782, and the second author was partially supported by the NSF grant DGE-1144082.

\section{$n$-dimensional Sierpinski space and hyperbolic groups}\label{sec:sierpinski}

We use Cannon's definition of $n$-dimensional Sierpinski space \cite{cannon_sierpinski} (Cannon uses the term Sierpinski \emph{curve} instead of Sierpinski \emph{space}). 

\begin{defn} Fix $n\ge0$. Let $D_1,D_2,\ld\sbs S^{n+1}$ be a sequence of open topological balls such that 
\begin{enumerate} 
\item[(i)] $\ov{D_i}\cap\ov{D_j}=\es$ for $i\neq j$, 
\item[(ii)] $\text{diam}(D_i)\ra0$ with respect to the round metric on $S^{n+1}$, and 
\item[(iii)] $\bigcup D_i\sbs S^{n+1}$ is dense. 
\end{enumerate}Then $\scr S^{n}:=S^{n+1}\bs\bigcup D_i$ is an \emph{$n$-dimensional Sierpinski space}. The spheres $S^{n}\cong\pa(\ov{D_i})\sbs\scr S$ are called \emph{peripheral spheres}. 
\end{defn}
\vspace{.1in}
\noindent{\it Example.} A 0-dimensional Sierpinski space $\scr S^0$ is a Cantor set, while the space $\scr S^1$ is the classical Sierpinski carpet. The Sierpinski space $\scr S^{n-2}$ arises as the visual boundary of hyperbolic groups (in the sense of Gromov \cite{gromov_hypgps}). For example, if $W^{n}$ is a hyperbolic $n$-manifold with nonempty totally geodesic boundary, then $\pi_1(W)$ is a hyperbolic group whose visual boundary is a Sierpinski $(n-2)$-space. To see this, observe that the universal cover $\wtil W$ can be embedded as a submanifold of hyperbolic space $\wtil W\hra \hy^n$. Using the disk model, the visual boundary $\pa_\infty\wtil W$ is a subspace of $\pa_\infty\hy^n\cong S^{n-1}$. The boundary components of $W$ lift to countably many disjoint geodesic hyperplanes $\hy^{n-1}\sbs\hy^n$. Each hyperplane has boundary a sphere $\pa_\infty\hy^{n-1}\cong S^{n-2}$, which bounds an open ball $\bb D^{n-1}\sbs S^{n-1}$. The visual boundary of $\wtil W$ is obtained by removing this countable collection of open balls, yielding a Sierpinski space $\scr S^{n-2}$. 

The simplest example of this is when $W$ is a torus with one boundary component (see Figure \ref{fig1}).
\begin{figure}[h!]
  \centering
    \includegraphics[width=0.35\textwidth]{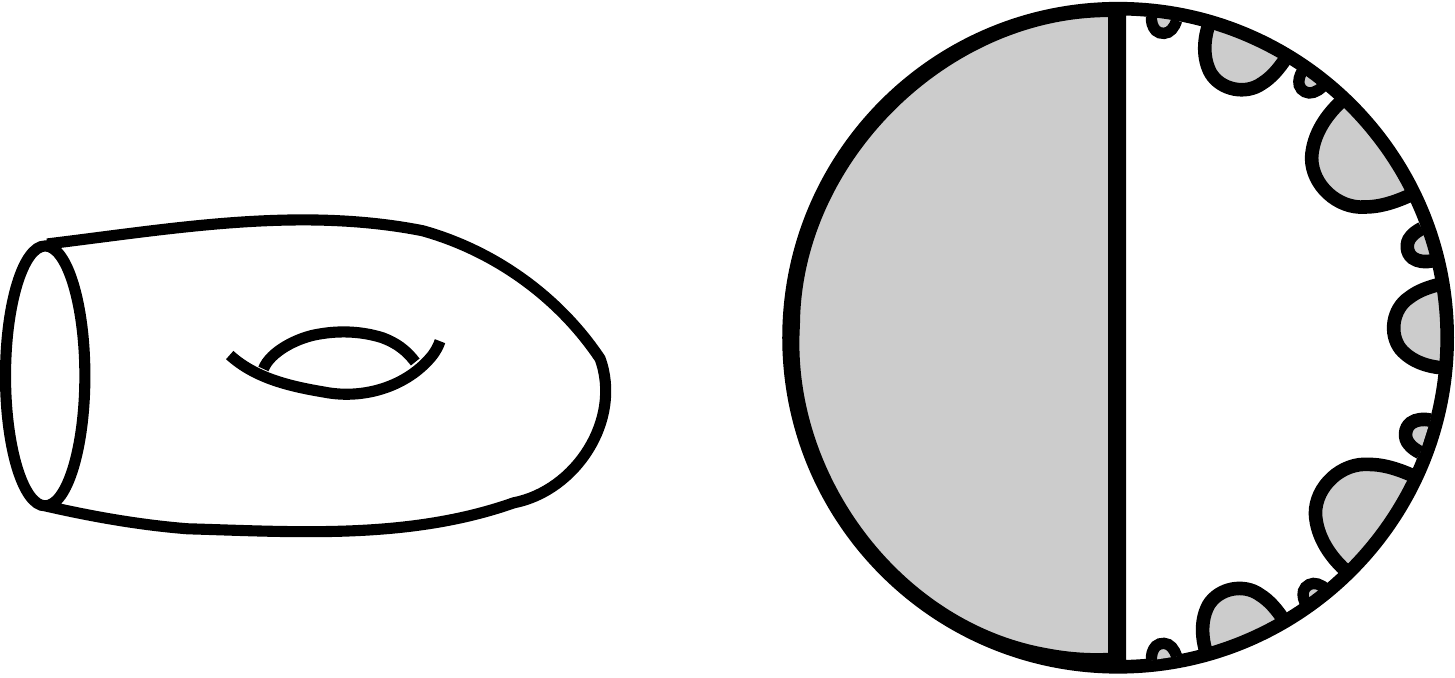}
  \caption{A torus with one boundary component, and its universal cover inside the hyperbolic plane.}\label{fig1}
\end{figure}
More examples are furnished by the following general theorem of Lafont \cite{lafont_boundaryCartanHadamard}.

\begin{thm}[Lafont]\label{thm:lafont}
Let $M^n$ be a compact, negatively curved Riemannian manifold with nonempty totally geodesic boundary. Then $\pa_\infty\wtil M$ is homeomorphic to $\scr S^{n-2}$.
\end{thm}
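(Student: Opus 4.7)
The plan is to realize $\wtil M$ as an isometrically embedded closed convex subset of a Cartan--Hadamard $n$-manifold whose visual boundary is $S^{n-1}$, and then to identify $\pa_\infty\wtil M$ with the complement in $S^{n-1}$ of an explicit family of open topological $(n-1)$-balls satisfying Cannon's three Sierpinski axioms. The first step is to form the metric double $DM$ across $\pa M$; since $\pa M$ is totally geodesic, the doubled metric is $C^1$ and locally CAT($-\ka$), and a $C^0$-small smoothing in a tubular neighborhood of $\pa M$ gives a smooth Riemannian metric on $DM$ that is still strictly negatively curved. Let $X=\widetilde{DM}$, a Cartan--Hadamard manifold with $\pa_\infty X\cong S^{n-1}$. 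The preimage of $\pa M$ in $X$ is a locally finite, $\pi_1(DM)$-invariant family $\ca H$ of pairwise disjoint totally geodesic hyperplanes, and $X\setminus\bigcup\ca H$ decomposes into open tiles, each an isometric copy of $\mathrm{int}\,\wtil M$. Fix one tile whose closure realizes $\wtil M\hookrightarrow X$ as a closed convex subset, and let $\ca H_0\sbs\ca H$ be the subfamily of hyperplanes bounding this tile. For each $H\in\ca H_0$ let $D_H$ be the closed half-space determined by $H$ on the side \emph{opposite} to $\wtil M$, so that $\wtil M=\bigcap_{H\in\ca H_0}(X\setminus D_H^\circ)$.

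Next, identify the visual boundary. Each $D_H$ is closed and convex, so its closure in $X\cup\pa_\infty X$ meets $\pa_\infty X$ in a closed topological $(n-1)$-ball; write $B_H$ for its interior, so $\pa B_H=\pa_\infty H$ is a topologically embedded $(n-2)$-sphere in $S^{n-1}$. Taking visual closures commutes with intersections of closed convex subsets, whence
$$\pa_\infty\wtil M\;=\;S^{n-1}\;\setminus\;\bigcup_{H\in\ca H_0}B_H.$$
The remaining work is to verify that $\{B_H\}_{H\in\ca H_0}$ satisfies Cannon's three axioms.

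\emph{(i) Disjoint closures.} Distinct $H,H'\in\ca H_0$ are disjoint in $X$; local finiteness of $\ca H$ together with cocompactness of the $\pi_1(DM)$-action forces $d(H,H')>0$ and, more delicately, rules out asymptotic pairs: any hypothetical asymptotic pair can be translated into a fixed fundamental domain where it coalesces into a single pair of hyperplanes at distance zero, contradicting disjointness of distinct lifts of components of $\pa M$. Pinched negative curvature then gives $\pa_\infty H\cap\pa_\infty H'=\es$, so $\ov{B_H}\cap\ov{B_{H'}}=\es$. \emph{(ii) Shrinking diameters.} Fix $p\in\wtil M$ and a visual metric on $\pa_\infty X$. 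Under pinched negative curvature, the visual diameter of $\pa_\infty H$ decays exponentially in $d(p,H)$; since $\pi_1(M)$ acts on $\ca H_0$ with finitely many orbits each having cocompact stabilizer, properness gives $\#\{H\in\ca H_0:d(p,H)\le R\}<\infty$ for every $R$, and hence $\mathrm{diam}(B_H)\to 0$. \emph{(iii) Density.} Equivalently, $\pa_\infty\wtil M$ has empty interior in $\pa_\infty X$. The cleanest route is that $\pi_1(M)\le\pi_1(DM)$ is quasiconvex (because $\wtil M$ is convex in $X$) and of infinite index (the tiling has infinitely many tiles), so by a standard result on quasiconvex subgroups of non-elementary word-hyperbolic groups the limit set of such a subgroup---here $\pa_\infty\wtil M$---has empty interior in the full boundary; alternatively, use that the geodesic flow on $T^1(DM)$ is ergodic and that almost every ray from $p$ consequently exits $\wtil M$.

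The principal obstacle is Step (i), the non-asymptoticity in variable negative curvature: showing that two distinct hyperplanes in $\ca H_0$ cannot share an ideal point requires using the compactness of $\pa M$ and the cocompactness of the $\pi_1(DM)$-action in tandem, via the fundamental-domain argument sketched above. The smoothing in Step 1 and the empty-interior statement in Step (iii) are technically routine once the appropriate standard inputs are invoked.
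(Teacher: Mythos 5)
The paper does not reprove this theorem; it cites Lafont's paper \cite{lafont_boundaryCartanHadamard} and gives only the constant-curvature heuristic (lifting $\wtil W$ into $\bb H^n$ and removing the round ideal balls bounded by the lifted geodesic hyperplanes). Your overall skeleton---double $M$ across $\pa M$, embed $\wtil M$ convexly into a Cartan--Hadamard manifold $X$ with $\pa_\infty X\cong S^{n-1}$, express $\pa_\infty\wtil M$ as the complement of the ideal sets cut out by the lifted hyperplanes, and verify Cannon's three axioms---is exactly the right generalization of that heuristic, and your handling of Cannon's axioms (i) and (ii) via cocompactness and visual-metric decay is sound. However there is a real gap at the point that carries all the technical weight: you assert that because $D_H$ is a closed convex half-space, its ideal closure meets $S^{n-1}$ in a closed topological $(n-1)$-ball, so that $\pa_\infty H$ is an $(n-2)$-sphere that bounds two complementary open balls. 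Convexity gives you that $\pa_\infty D_H$ is a closed subset of $S^{n-1}$ with frontier $\pa_\infty H$, a topological $(n-2)$-sphere; it does \emph{not} give you that this sphere is flatly embedded, and without flatness the complementary regions need not be balls (Alexander-horned phenomena). In constant curvature $\pa_\infty H$ is a round sphere and the issue evaporates, which is why the paper's informal sketch suffices there; in variable negative curvature showing that $(\pa_\infty X,\pa_\infty H)\cong(S^{n-1},S^{n-2})$ as pairs is the whole content of the theorem. This is precisely what is done---in a closely analogous situation---in Claims 1 and 2 of the proof of Theorem \ref{thm:converse} in this very paper, via geodesic contraction maps shown to be cell-like and then approximated by homeomorphisms (Siebenmann, Quinn, Armentrout, Brown). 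It is also the source of the dimension restriction in Lafont's original statement and of the Freedman--Quinn remark following the theorem. Your proposal has no dimension hypotheses anywhere, which is the tell that the hard step has been silently assumed.

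Two smaller points. First, your smoothing step is stated in the wrong topology: curvature is a functional of the $2$-jet of the metric, so a $C^0$-small perturbation gives no control on it. In fact the double of a totally geodesic boundary produces a $C^2$ metric (all derivatives of the metric up to order two match across $\pa M$; only third normal derivatives flip sign), so the Cartan--Hadamard theorem applies directly and $\pa_\infty X\cong S^{n-1}$ without any smoothing; if you insist on smoothing, it must be $C^2$-small and you should justify why such a mollification exists keeping the curvature pinched. Second, and related to the main gap: Cannon's axioms require the $D_i$ to be open topological \emph{balls}, not just open sets with spherical frontier, so the flatness issue is not cosmetic---until it is resolved you cannot even state axioms (i)--(iii) about the $B_H$.
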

We remark that the dimension restriction in the statement of \cite[Theorem 1.1]{lafont_boundaryCartanHadamard} is unnecessary thanks to work of Freedman and Quinn (c.f.\ the MathSciNet review of \cite{ruane}). \\

\section{Proof of Theorem \ref{thm:main}}\label{sec:main}

\begin{proof} We proceed in three steps. 

\vskip 5pt

\noindent{\bf Step 1 (Peripheral subgroups and Poincar\'e duality pairs).} Recall that $G$ is a torsion-free hyperbolic group such that $\pa_\infty G\cong\scr S^{n-2}$. The stabilizer $H\le G$ of a peripheral sphere $S^{n-2}\sbs\scr S^{n-2}$ is called a \emph{peripheral subgroup}. By Kapovich-Kleiner \cite[Theorem 8(1)]{kk}, there are finitely many peripheral subgroups, up to conjugacy in $G$. Choose representatives $H_1,\ld,H_p$ for the conjugacy classes. 

In order to show that $G$ is the fundamental group of a manifold with boundary, we first need to establish that $G$ has the same Poincar\'e duality as a manifold with boundary. To be precise, Kapovich-Kleiner \cite[Corollary 12]{kk} show that $(G,\{H_i\})$ is a \emph{group PD($n$) pair} in the sense of Bieri-Eckmann \cite{bieri+eckmann_pdpairs}. This has the following topological consequence (see \cite[Theorem 1]{johnson+wall} and \cite[Section 6]{bieri+eckmann_duality}): let $(X,Y)$ be the CW-complex pair obtained by taking $Y=\coprod_{i=1}^p BH_i$ and defining $X$ to be the mapping cylinder of the map $\coprod BH_i\ra BG$. Then $(X,Y)$ is a \emph{CW-complex PD($n$) pair} in the sense of Wall \cite{wall_pc}. In particular this means that there are isomorphisms  $H^i(X;\z)\cong H_{n-i}(X,Y;\z)$ and $H^{i-1}(Y;\z)\cong H_{n-i}(Y;\z)$ induced by cap product with $[X]\in H_n(X)$ and $\pa[X]\in H_{n-1}(Y)$, respectively, and that $X$ is a \emph{finitely dominated} CW complex (i.e.\ there exists a finite CW complex $L$ and maps $X\xra i L\xra r X$ such that $r\circ i=\text{id}_X$). 

\vskip 10pt

\noindent{\bf Step 2 (Preparing for surgery).} Let $(X,Y)$ be the pair from Step 0. We now explain why $(X,Y)$ is homotopy equivalent to a pair $(K,N)$ such that \begin{enumerate}[(A)]\item $K$ is a \emph{finite} CW complex, and \item $N$ is a manifold.\end{enumerate} This will allow us to employ the total surgery obstruction in Step 3. 

\vskip 5 pt

(A) Wall's finiteness obstruction $\til o(X)\in\wtil K_0(X)$ vanishes if and only if $X$ is homotopy equivalent to a finite CW complex \cite{wall_finitenessobstruction}. Thus to show (A), it suffices to show $\wtil K_0(X)=0$. This is a corollary of the following powerful result (see \cite[Proof of Theorem 1.2]{blw} for more information): 

\begin{thm}[Bartels-L\"uck \cite{bl_bc}, Bartels-L\"uck-Reich \cite{blr}]\label{thm:blr}
Let $G$ be a torsion-free hyperbolic group $G$. Then 
\begin{itemize}
\item[($\dagger$)] the (non-connective) $K$-theory assembly map $H_i(BG;\bb K_\z)\ra K_i(\z G)$ is an isomorphism for $i\le 0$ and surjective for $i=1$; 
\item[($\ddagger$)] the (non-connective) $L$-theory assembly map $H_i\big(BG;^w\bb L_\z^{\lan-\infty\ran}\big)\ra L_i^{\lan-\infty\ran}(\z G,w)$ is bijective for every $i\in\z$ and every orientation homomorphism $w:G\ra\{\pm1\}$. 
\end{itemize}
\end{thm}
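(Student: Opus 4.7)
This statement is a deep result whose complete proof spans the long papers \cite{bl_bc} and \cite{blr}; I can only sketch the overall strategy one would follow. The underlying framework is the \emph{Farrell--Jones isomorphism conjecture}: for any group $G$ and any family $\mathcal F$ of subgroups closed under conjugation and subgroups, one studies an assembly map of the form $H_*^G(E_{\mathcal F}G;\bb K_\z)\to K_*(\z G)$, and likewise for $L$-theory. For the full family $\mathcal{VC}yc$ of virtually cyclic subgroups, the conjecture predicts these maps are isomorphisms. The first step is to reduce the theorem to this conjecture: when $G$ is torsion-free hyperbolic the only virtually cyclic subgroups are trivial or infinite cyclic, and a Bass--Heller--Swan / Wang sequence argument, combined with the low-degree vanishing of $K_i(\z)$ and $NK_i(\z)$, lets one pass from the $\mathcal{VC}yc$-assembly map to the assembly map over the trivial family in the stated ranges for $(\dagger)$, while $(\ddagger)$ follows directly because the $L$-theoretic assembly map is known to be an isomorphism for all $i$ once the $\mathcal{VC}yc$-conjecture holds.

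The core geometric work is to verify the $\mathcal{VC}yc$-Farrell--Jones conjecture for $G$. The plan is to apply the axiomatic criterion of Bartels--Lück: it suffices to show that $G$ is \emph{transfer reducible} over $\mathcal{VC}yc$. Concretely, one must exhibit, for every finite $S\sbs G$, a compact contractible controlled space $X$ with $G$-action, together with, for each $n$, a simplicial complex $\Si_n$ of dimension bounded independently of $n$, a cell-preserving $G$-action on $\Si_n$ with stabilizers in $\mathcal{VC}yc$, and a $G$-equivariant map $f_n\colon G\to\Si_n$ whose ``$S$-translates'' land in uniformly small subcomplexes. This reduces the whole problem to constructing appropriate covers of a geometric object associated to $G$.

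That object is the \emph{flow space} $FS(X)$ of (generalized) geodesics in the Rips complex $X=P_d(G)$ for sufficiently large $d$. The plan is then to establish the main technical theorem of \cite{bl_bc}: for the geodesic flow $\Phi_t$ on $FS(X)$, and for every $\al,\be>0$, there is an open $G$-invariant cover $\mathcal U$ of $FS(X)\bs FS(X)^{\mathbb R}$ whose multiplicity is bounded by $N=N(G)$, whose elements have $\mathcal{VC}yc$ stabilizers, and such that each orbit segment $\Phi_{[-\al,\al]}(x)$ is contained in some $U\in\mathcal U$ of diameter at most $\be$. These ``long thin covers'' are pulled back to produce the maps $f_n$ above. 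After a separate, easier argument handling the fixed set $FS(X)^{\mathbb R}$, the transfer reducibility hypothesis is verified and the axiomatic machinery delivers $(\dagger)$ and $(\ddagger)$.

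The main obstacle, by far, is the construction of the long thin covers. The proof in \cite{bl_bc} exploits Gromov hyperbolicity in an essential way: one uses the contracting properties of the geodesic flow and boundary dynamics (visibility, shadowing, near-unique geodesic representatives between points on $\pa_\infty G$) to control overlaps, while the bound on multiplicity requires a delicate induction on the ``period'' structure of near-periodic flow lines, whose stabilizers are precisely the virtually cyclic subgroups of $G$. The $L$-theoretic statement $(\ddagger)$ requires in addition the refinements of \cite{blr} to incorporate orientation characters and the decoration $\lan-\infty\ran$, together with upgrading the arguments from $K$-theory with coefficients in $\z$ to the appropriate $L$-theory spectra; this is mostly a matter of verifying that the Bartels--Lück axiomatic reduction also applies to quadratic $L$-theory, which is precisely what \cite{blr} establishes.
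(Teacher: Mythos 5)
The paper does not prove Theorem~\ref{thm:blr}; it is quoted as an external result attributed to Bartels--L\"uck and Bartels--L\"uck--Reich, with the paper simply citing \cite{bl_bc} and \cite{blr} (and pointing the reader to the proof of Theorem~1.2 of \cite{blw} for an explanation of how to extract the stated form). So there is no in-paper argument to compare against. Your sketch of the strategy underlying those citations---reduce to the Farrell--Jones conjecture over the family of virtually cyclic subgroups, verify it by establishing transfer reducibility via long thin covers of the flow space on a Rips complex, and then descend to the trivial family using that for torsion-free $G$ the only nontrivial virtually cyclic subgroups are infinite cyclic and that $\z$ is regular (so the relevant Nil/UNil terms vanish, with Bass--Heller--Swan and Shaneson/Wang splittings controlling the relative assembly)---is a fair high-level summary. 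One attribution correction: the $K$-theoretic Farrell--Jones conjecture for hyperbolic groups, including the flow-space and long-thin-cover machinery, is the content of \cite{blr}, while the $L$-theoretic version (and the extension to CAT(0) groups, i.e.\ the Borel-conjecture paper) is \cite{bl_bc}; you have these two roles reversed in your last two paragraphs.
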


The conditions ($\dagger$) and ($\ddagger$) are called the Farrell-Jones conjectures in $K$- and $L$-theory, respectively.

(B) It remains to see that $Y$ is homotopy equivalent to a closed manifold $N^{n-1}$. By definition $Y$ is homotopy equivalent to $\coprod_{i=1}^p BH_i$. The peripheral subgroups $H_i$ are all hyperbolic groups, and $\pa_\infty H_i$ is identified with the sphere $S^{n-2}\sbs\scr S^{n-2}$ stabilized by $H_i$ (see \cite[Theorem 8]{kk}). The following result from \cite[Theorem A]{blw} implies that $Y\simeq\coprod_{i=1}^p BH_i$ is homotopy equivalent to a manifold:

\begin{thm}[Bartels-L\"uck-Weinberger \cite{blw}]\label{thm:blw}Fix $n\ge7$, and let $H$ be a torsion-free hyperbolic group. If $\pa_\infty H\cong S^{n-2}$, then there is a closed aspherical manifold $N^{n-1}$ such that $\pi_1(N)\cong H$. 
\end{thm}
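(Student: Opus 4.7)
The plan is to realize a $K(H,1)$ as a closed topological manifold by running classical surgery on a suitable Poincar\'e complex model of $BH$, using the Farrell--Jones machinery of Theorem \ref{thm:blr} to control every obstruction that appears.

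First I would promote $H$ to a Poincar\'e duality group of dimension $n-1$. By a theorem of Bestvina--Mess, for any torsion-free hyperbolic group the virtual cohomological dimension equals $\dim(\pa_\infty H)+1$, and $H$ is a PD($n-1$) group precisely when $\pa_\infty H$ has the integral \v{C}ech cohomology of $S^{n-2}$. Since here $\pa_\infty H\cong S^{n-2}$ on the nose, $BH$ is automatically a Poincar\'e complex of formal dimension $n-1$.

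Next I would upgrade $BH$ to a \emph{finite} Poincar\'e complex. The sole obstruction is Wall's finiteness class $\til o(BH)\in\wtil K_0(\z H)$. Part ($\dagger$) of Theorem \ref{thm:blr} combined with an Atiyah--Hirzebruch argument forces $\wtil K_0(\z H)=0$ (and more generally $K_{-i}(\z H)=0$ for $i\ge 1$), so $BH$ is homotopy equivalent to a finite CW complex $K$. A standard thickening/regular-neighborhood construction lets me take $K$ to have formal dimension $n-1$, yielding a finite $(n-1)$-dimensional Poincar\'e complex.

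The heart of the argument is then to show that $K$ is homotopy equivalent to a closed topological manifold. Since $n-1\ge 6$, I would invoke Ranicki's total surgery obstruction $s(K)\in\bb S_{n-1}(K)$: it vanishes precisely when such a manifold exists, and it fits into a long exact sequence
\[
\cd\to H_{n-1}(K;\bb L_\bullet)\xra{A} L_{n-1}(\z H)\to \bb S_{n-1}(K)\to H_{n-2}(K;\bb L_\bullet)\xra{A} L_{n-2}(\z H)\to\cd,
\]
where $\bb L_\bullet$ is the $1$-connective quadratic $L$-spectrum and $A$ is assembly. Part ($\ddagger$) of Theorem \ref{thm:blr} asserts that the $\lan-\infty\ran$-decorated assembly map is an isomorphism. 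The vanishing of the lower $K$-theory of $\z H$ from ($\dagger$), fed into the Rothenberg-type sequences relating the various $L$-decorations, lets me upgrade this to a bijective assembly map with the $1$-connective decoration. Consequently $\bb S_{n-1}(K)=0$, hence $s(K)=0$, and so $K$ is homotopy equivalent to a closed topological manifold $N^{n-1}$. Since $N\simeq K\simeq BH$ and $H$ is torsion-free, $N$ is aspherical with $\pi_1(N)\cong H$.

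The main obstacle I anticipate is precisely the decoration bookkeeping in the final paragraph: the Farrell--Jones input ($\ddagger$) produces an assembly isomorphism in $\lan-\infty\ran$-decorated $L$-theory, but the total surgery obstruction naturally lives in the structure set built from $1$-connective $\bb L_\bullet$. Passing between the two decorations is what actually forces one to consume \emph{both} ($\dagger$) and ($\ddagger$) of Theorem \ref{thm:blr}, and it is also what pins the argument to the stable range $n-1\ge 6$ (so $n\ge 7$, as in the hypothesis).
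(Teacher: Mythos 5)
Steps 1 and 2 of your outline are fine, and the general surgery-theoretic shape is right, but the final algebraic step has a genuine gap -- and it is exactly the gap that makes the Bartels--L\"uck--Weinberger theorem need geometric input beyond the Farrell--Jones conjectures. The total surgery obstruction $s(K)$ lives in $\bb S_{n-1}(K)$, which is built from the \emph{$1$-connective} $L$-spectrum $\bb L_\bullet$. What ($\ddagger$), even after trading $K$-theoretic decorations through ($\dagger$) and the Rothenberg sequences, gives you is an assembly isomorphism for the \emph{non-connective} ($4$-periodic) spectrum, and hence vanishing of the \emph{$4$-periodic} structure group $\ov{\bb S}_{n-1}(K)$. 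Decoration ($\lan j\ran$, $h$, $s$) and connectivity ($\bb L_\bullet$ vs.\ $\ov{\bb L}_\bullet$) are independent axes; Rothenberg sequences move decorations, not connectivity, so there is no ``$1$-connective decoration'' to upgrade to. The discrepancy between $\bb S_{n-1}(K)$ and $\ov{\bb S}_{n-1}(K)$ is controlled by $H_{n-1}(K;L_0(\z))=H_{n-1}(K;\z)\cong\z$, the top homology of the closed PD$(n-1)$ complex, which does \emph{not} vanish. That $\z$ is precisely the Quinn resolution obstruction, and the purely algebraic argument only proves that $K$ is homotopy equivalent to a closed ANR \emph{homology} $(n-1)$-manifold $W$ with the disjoint disk property.

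The missing ingredient is the one that uses $\pa_\infty H\cong S^{n-2}$ as a \emph{topological} sphere rather than merely a cohomology sphere: one $Z$-compactifies the universal cover $\wtil W$ by $\pa_\infty H$ to obtain a homology manifold with boundary $S^{n-2}$; since $S^{n-2}$ is a genuine manifold and Quinn's resolution index is locally constant, the index equals $1$ everywhere, forcing $\wtil W$ (hence $W$) to be a true topological manifold. You can see the same connectivity issue at work in the paper's Step 3 of the proof of Theorem \ref{thm:main}: facts (a)--(c) there relate $\bb S_n(K)$ and $\ov{\bb S}_n(K)$ via the term $H_n(K;\z)$, and in that \emph{relative} setting it vanishes because $\pa[K]\neq 0$, whereas for a closed PD complex this term cannot be killed algebraically and must instead be handled by the $Z$-compactification argument.
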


\noindent{\bf Step 3 (The total surgery obstruction).} Let $(K,N)$ be the pair from Step 2. The \emph{structure set} $S^{TOP}(K)$ is defined as the set of equivalence classes of homotopy equivalences $f:(M,\pa M)\ra(K,N)$ where $(M,\pa M)$ is a manifold with boundary and $f\rest{}{\pa M}:\pa M\ra N$ is a homeomorphism (the equivalence relation is $h$-cobordism rel $\pa$; see \cite[Chapter 18]{ranicki_algebraicLtheory}). Surgery theory provides computable obstructions to determine whether or not $(K,N)$ is homotopy equivalent to a manifold with boundary, i.e.\ whether or not $S^{TOP}(K)\neq\es$. 

We will follow the algebraic approach detailed in Ranicki \cite{ranicki_algebraicLtheory}. The \emph{total surgery obstruction} $s_\pa(K)$ lives in the \emph{structure group} $\bb S_n(K)$ and has the property that $s_\pa(K)=0$ if and only if $(K,N)$ is homotopy equivalent (rel boundary) to an $n$-manifold with boundary; see \cite[Theorem 1]{ranicki_tso}. The group $\bb S_n(K)$ fits into the \emph{algebraic surgery exact sequence} \cite[Definition 15.19]{ranicki_algebraicLtheory}
\[\cd\ra H_n(K;\bb L_\bullet)\xra A L_n\big(\pi_1(K)\big)\ra \bb S_n(K)\ra H_{n-1}(K;\bb L_\bullet)\ra\cd\]
where $A$ is the \emph{assembly map} and $\bb L_\bullet$ is the \emph{1-connective surgery spectrum} whose 0th space is $G/TOP$ and whose homotopy groups are $\pi_i(\bb L_\bullet)=L_i(\z)$ for $i\ge1$. 

To show that $S^{TOP}(K)\neq\es$, we will show that $\bb S_n(K)=0$. For this, we need to consider two other versions of the structure groups. 

\begin{itemize} 
\item 
The \emph{quadratic structure groups} $\bb S_i(\z,K)$ are defined in \cite[Definition 14.6]{ranicki_algebraicLtheory}. 

\item The group $\ov{\bb S}_n(K)$ (see \cite[Chapter 25]{ranicki_algebraicLtheory}) belongs to the \emph{4-periodic algebraic surgery exact sequence}
\[\cd\ra H_n(K;\ov{\bb L}_\bullet)\xra A L_n\big(\pi_1(K)\big)\ra \ov{\bb S}_n(K)\ra H_{n-1}(K;\ov{\bb L}_\bullet)\ra\cd\]
where $\ov{\bb L}_\bullet$ is the \emph{0-connective surgery spectrum} whose 0th space is $L_0(\z)\ti G/TOP\cong\z\ti G/TOP$ and whose homotopy groups are $\pi_i(\ov{\bb L}_\bullet)=L_i(\z)$ for $i\ge0$.

\end{itemize} 

\noindent In order to show that $\bb S_n(K)=0$, we need the following three facts. 
\begin{enumerate}[(a)] 
\item The groups $\ov{\bb S}_n(K)$ and $\bb S_n(\z,K)$ are equal. This follows directly from Ranicki \cite[Proposition 15.11(iii)-(iv)]{ranicki_algebraicLtheory}. Here we have used that $\dim K\ge6$. Note that $L_q(\z)=0$ for $q=-1$, and in Ranicki's notation $\bb S_n\lan 0\ran(\z,K)=\ov{\bb S}_n(K)$ (compare with \cite[Page 289]{ranicki_algebraicLtheory}). 
\item The quadratic structure groups $\bb S_i(\z, K)\cong \bb S_i(\z, BG)$ are 0 for all $i\in\z$. For the proof, see \cite[Proof of Theorem 1.2]{blw}. Note that this also uses Theorem \ref{thm:blr}. 
\item There is an exact sequence
\[H_n\big(K;L_0(\z)\big)\ra\bb S_n(K)\ra\ov{\bb S}_n(K).\]
See Ranicki \cite[Theorem 25.3(i)]{ranicki_algebraicLtheory}.
\end{enumerate}

From (a) and (b), it follows that $\ov{\bb S}_n(K)=0$. Then, by (c), to show $\bb S_n(K)=0$ is suffices to show $H_n(K;L_0(\z))= H_n(K;\z)=0$. This can be seen from the long exact sequence in homology of a pair $(K,N)$:
\[H_n(N;\z)\ra H_n(K;\z)\ra H_n(K,N;\z)\xra\pa H_{n-1}(N;\z).\]
The group $H_n(N;\z)=0$ because $N$ is a PD$(n-1)$ complex. Also $H_n(K,N;\z)\cong\z$ is generated by the fundamental class $[K]$, and $\pa[K]$ is a sum of fundamental classes of the components of $N$. In particular $\pa[K]\neq0$, so $H_n(K;\z)=0$, as desired. 

This concludes the proof of Theorem \ref{thm:main}.
\end{proof}

\section{Proof of Theorem \ref{thm:converse}}\label{sec:converse} 

The proof of Theorem \ref{thm:converse} is an adaptation of \cite[Section (5a), (5c)]{dj}. We briefly explain the relative version of \cite{dj} and the problem with extending it directly to our case. 

The paper \cite{dj} uses hyperbolization to construct a closed, locally CAT(-1) manifold $M^n$ with the unusual property that $\pa_\infty \wtil M$ is {\bf not} homomorphic to $S^{n-1}$. To show this, they establish that $\pa_\infty\wtil M-\{\ga_+,\ga_-\}$ is not simply connected, where $\ga_+,\ga_-$ are the endpoints of a geodesic $\ga:(-\infty,\infty)\ra\wtil M$ whose link is a homology sphere $H$ with $\pi_1(H)\neq1$. In order to find nontrivial elements of $\pi_1\big(\pa_\infty\wtil M-\{\ga_+,\ga_-\}\big)$, \cite{dj} studies metric spheres $S_p(r)$ centered at $p=\ga(0)$. When $s>r$, there are natural \emph{geodesic contraction maps} $\rho_r^s:S_p(s)\ra S_p(r)$, which allow one to relate the topology of small spheres to the topology of $\pa_\infty\wtil M=\varprojlim \{S_p(r)\}_{r>0}$. The central property of the maps $\rho_{r}^s$ that makes the comparison work is that they are \emph{cell-like}. 

Following \cite{dj}, we will construct a triangulated, locally CAT(-1) manifold $M$ with totally geodesic boundary $\pa M$ whose universal cover $\wtil M$ contains a geodesic $\ga:(-\infty,\infty)\ra\wtil M$ whose link is a homology sphere $H$ with $\pi_1(H)\neq1$. As above, we wish to show $\pi_1(\pa_\infty\wtil M-\{\ga_+,\ga_-\})\neq1$ (Lemma \ref{fund-grp-Sierpinski} below then implies that $\pa_\infty\wtil M$ is {\bf not} homeomorphic to $\scr S^{n-2}$). In this case $\wtil M$ is a manifold with boundary, and the maps $\rho_r^s:S_p(s)\ra S_p(r)$ are not surjective for $s>>r$. This prevents us from proceeding directly as in \cite{dj}. To bypass this issue, we ``cap off" the boundary components of $\wtil M$ to obtain a CAT(-1) manifold $\what M\supset\wtil M$ to which the arguments of \cite{dj} apply; in particular, $\pi_1(\pa_\infty\what M- \{\ga_+,\ga_-\})\neq1$. At this point it will be clear from the capping procedure (see specifically Lemma \ref{lem:assertion} below) that $\pi_1(\pa_\infty\wtil M- \{\ga_+,\ga_-\})\neq1$. 

For the proof of Theorem \ref{thm:converse}, we need the following elementary fact. 

\begin{lem}\label{fund-grp-Sierpinski}
For $n\geq 2$, the $n$-dimensional Sierpinski space $\scr S^{n}$ is simply-connected. Moreover, if $F \subset \scr S^n$ is any finite collection of points in $\scr S^n$, then $\scr S^n \setminus F$ is still simply-connected.
\end{lem}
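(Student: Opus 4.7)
My plan is to fill loops by disks in two stages: first fill in the ambient sphere $S^{n+1}$ using its simple connectivity, and then push the resulting disk off each removed open ball $D_i$ by a surgery that lands on the peripheral sphere $\pa D_i \cong S^n$. The key is that these peripheral spheres are simply connected for $n \ge 2$, so any loop on $\pa D_i$ bounds a disk in $\pa D_i$, which is what enables the surgery.

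In more detail, given a loop $\gamma \colon S^1 \to \scr S^n$, I would extend it via $\pi_1(S^{n+1}) = 0$ to a map $f \colon D^2 \to S^{n+1}$. Since each $\pa D_i$ is locally flatly embedded (generalized Schoenflies), after a PL approximation I may take $f$ transverse to every peripheral sphere, so each $f^{-1}(\pa D_i)$ is a closed $1$-submanifold of $D^2$. The innermost circles $C$ of $f^{-1}(\pa D_i)$ bound subdisks $E \sbs D^2$ with $f(E) \sbs \ov{D_i}$; since $f|_C$ is nullhomotopic in $\pa D_i$, I replace $f|_E$ by a map of $E$ into $\pa D_i$ extending $f|_C$. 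Because the $\ov{D_i}$ are pairwise disjoint, the preimages $f^{-1}(\ov{D_i})$ are pairwise disjoint in $D^2$, so the surgeries for different $i$ do not interfere. The resulting modified map $f'$ takes values in $\scr S^n$, and is continuous because any modification at $x$ with $f(x)\in \ov{D_i}$ produces a new value inside $\ov{D_i}$, while $\diam(D_i) \ra 0$ guarantees continuity at points where $f(x)$ is an accumulation of a sequence of $D_i$'s.

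For the ``moreover'' statement, let $F = \{p_1,\ld,p_k\} \sbs \scr S^n$. Since $\dim S^{n+1} \ge 3$ one has $\pi_1(S^{n+1} \bs F) = 0$, so a loop $\gamma$ in $\scr S^n \bs F$ extends to a map $f \colon D^2 \to S^{n+1} \bs F$. After a generic perturbation ensuring that no peripheral sphere used in the surgery passes through $F$ (using that each $\pa D_i$ has codimension $1$ while $F$ is zero-dimensional), the same pushoff procedure produces a disk in $\scr S^n \bs F$ bounded by $\gamma$.

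The main obstacle is handling the countable family of surgeries rigorously, that is, proving that the resulting map $f'$ is genuinely continuous rather than merely well-defined pointwise. This is where the defining hypotheses $\ov{D_i} \cap \ov{D_j} = \es$ and $\diam(D_i) \ra 0$ are essential: the former makes the surgeries on distinct $D_i$'s independent, and the latter provides the uniform $C^0$-control needed for continuity of the limit disk.
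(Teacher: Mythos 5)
Your proposal is correct and takes essentially the same approach as the paper: extend the loop to a disk in $S^{n+1}$, make it transverse to the peripheral spheres, push the disk off each $D_i$ by replacing the relevant pieces with maps into $\partial D_i \cong S^n$ (using $n\ge 2$), and invoke $\diam(D_i)\to 0$ for convergence to a continuous map into $\scr S^n$. The only cosmetic difference is bookkeeping -- you perform all the surgeries simultaneously (justified by disjointness of the $\ov{D_i}$), whereas the paper does them inductively and passes to a uniform limit -- and your phrasing of the ``innermost circle'' surgery would be cleaner if stated as replacing the whole of $f^{-1}(\ov{D_i})$ by a map into $\partial D_i$ (which exists by obstruction theory since $\pi_1(S^n)=0$), but these are the same argument.
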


\begin{proof}
Model $\scr S^n$ as the complement, in the standard sphere $S^{n+1}$, of the interiors of a dense collection of pairwise disjoint round disks $D_i$ whose radii $r_i$ tend to zero. If $\gamma$ is a curve in $\scr S^n \subset S^{n+1}$, we can find a bounding disk $\phi: \mathbb D^2 \rightarrow S^{n+1}$. Perturbing the map a little bit, we can assume that $\phi$ is transverse to all the $D_i$. Inductively define $\phi_k: \mathbb D^2\rightarrow S^{n+1}$ to have image disjoint from $D_1, \ldots , D_k$, as follows. $\phi^{-1}(\partial D_k)$ is a finite collection of curves in $\mathbb D^2$, and each of these curves maps to a curve $\eta_j$ on $\partial D_k\cong S^n$. Since $n\geq 2$, we can redefine $\phi_{k-1}$ on the interior of these finitely many curves in $\mathbb D^2$, by sending each of these to a bounding disk in $\partial D_k$ for the corresponding $\eta_j$. Since the diameter of the $D_i$ shrinks to zero, the maps $\phi_k$ converge to a map $\phi_\infty:\mathbb D^2 \rightarrow S^{n+1}$ whose boundary coincides with $\gamma$, and whose image is disjoint from
the interiors of all the $D_i$, i.e. the image of $\phi_\infty$ lies in $\scr S^n$. A similar argument works even after removing finitely many points in $\scr S^n$.
\end{proof}

\begin{proof}[Proof of Theorem \ref{thm:converse}]\mbox{ } We proceed in several steps.

\vskip 5pt 

\noindent{\bf Step 1 (Construction).} We construct $M$ using the \emph{strict hyperbolization} construction of Charney-Davis \cite{cd}. For simplicity we will focus primarily on the case $n\ge5$. The case $n=4$ will be explained at the end of Step 2. 

The case $n\geq 5$ is modeled on \cite[Section (5c)]{dj}. Fix a smooth $n$-manifold $X$ with non-empty connected boundary $Y$, equipped with a PL-triangulation. Choose a smooth homology sphere $H^{n-2}$ with non-trivial fundamental group, take a PL-triangulation of $H$, and consider the double suspension $\Sigma^2 H \cong S^n$, with the obvious induced (no longer PL) triangulation. Take the triangulated connect sum $X \sharp \Sigma^2 H$, obtained by using the interior of a pair of $n$-simplices in the triangulated $X$, $\Sigma^2 H$ to take the connect sum (and chosen so that simplex in $X$ does not intersect the boundary of $X$). Note that, topologically $X \sharp \Sigma^2 H$ is homeomorphic to $X$, but now has a triangulation that fails to be PL -- there is precisely one $4$-cycle in the $1$-skeleton of the triangulation whose link is $H$ (instead of $S^{n-2}$). Finally, we let $M^n = h(X \sharp \Sigma^2 H)$, an $n$-manifold with boundary $N^{n-1}=h(Y)$, and set $G= \pi_1(M)$. 

\vskip 10 pt

\noindent{\bf Step 2 (Capping procedure).} To show that $\pa_\infty G\neq\scr S^{n-2}$, first identify $\pa_\infty G\cong\pa_\infty\wtil M$. We use Lemma \ref{fund-grp-Sierpinski} and show that $\pi_1(\pa_\infty\wtil M\setminus F)\neq1$, where $F=\{\ga_+,\ga_-\}$ consists of two points. 

$\wtil M$ is a non-compact CAT(-1) manifold with non-empty boundary, each component of which is isometric to $\widetilde {h(Y)}$. To understand $\pa_\infty\wtil M$, we first define an isometric embedding $\wtil M\hra\what M$ into a CAT(-1) space without boundary. It will be easier to analyze $\what M$, which is obtained from $\wtil M$ by gluing a certain space $Z$ to each component of $\pa_\infty\wtil M$. Next we define $Z$ and describe its key features. 

Let $DX$ be the double of $X$ across $Y$, with the induced triangulation. We apply a strict hyperbolization of Charney-Davis \cite{cd} to obtain a closed $n$-manifold $h(DX)$ equipped with a locally CAT(-1) metric. The universal cover $\widetilde {h(DX)}$ has boundary at infinity homeomorphic to $S^{n-1}$ (see \cite[Theorem (3b.2)]{dj}). Take any lift $\widetilde {h(Y)}$ of the separating codimension one submanifold $h(Y) \subset h(DX)$. Then $\widetilde {h(Y)}$ separates $\widetilde {h(DX)}$ into two (isometric) convex subsets. Denote by $Z$ the closure of one of these convex subsets. Then $Z$ is a non-compact locally CAT(-1) $n$-manifold with totally geodesic boundary $\widetilde {h(Y)}$.  

\begin{lem}\label{lem:assertion}
The boundary at infinity $\partial _\infty Z$ of $Z$ is homeomorphic to $\mathbb D^{n-1}$. The 
inclusion $\widetilde {h(Y)} = \partial Z$ induces, at the boundary at infinity, an identification $\partial_\infty \widetilde {h(Y)}= S^{n-2} = \partial (\mathbb D^{n-1})$.
\end{lem}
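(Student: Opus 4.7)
The plan is to exploit the doubling symmetry of $DX$ to equip $\widetilde{h(DX)}$ with an isometric involution swapping $Z$ with the opposite closed half, and to combine this with the identification of visual boundaries coming from \cite{dj}.

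By functoriality of the Charney-Davis strict hyperbolization applied to the subcomplex $Y \subset DX$, the swap involution $\tau\colon DX \to DX$ descends to an isometric involution of $h(DX)$ whose fixed set is precisely $h(Y)$. Lifting to the universal cover yields an isometric involution $\tilde\tau\colon \widetilde{h(DX)} \to \widetilde{h(DX)}$ fixing $\widetilde{h(Y)}$ pointwise and swapping $Z$ with the opposite closed half, call it $Z'$. By \cite[Theorem (3b.2)]{dj}, applied both to $h(DX)$ and to the closed aspherical $(n-1)$-manifold $h(Y)$ (whose fundamental group is hyperbolic since Charney-Davis hyperbolization yields locally CAT$(-1)$ manifolds), we have $\partial_\infty \widetilde{h(DX)} \cong S^{n-1}$ and $\partial_\infty \widetilde{h(Y)} \cong S^{n-2}$. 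The isometry $\tilde\tau$ extends continuously to the visual compactification, interchanging $\partial_\infty Z$ with $\partial_\infty Z'$ and fixing $\partial_\infty \widetilde{h(Y)}$ pointwise.

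Next I would carry out the standard CAT$(-1)$ analysis of how the two halves decompose the boundary sphere. Fix a basepoint $p \in \widetilde{h(Y)}$. Every $\xi \in S^{n-1}$ is the endpoint of a unique geodesic ray from $p$, which by convexity lies entirely in at least one of $Z, Z'$, and which lies in both iff it is contained in $Z \cap Z' = \widetilde{h(Y)}$. This gives
\[\partial_\infty Z \cup \partial_\infty Z' = \partial_\infty \widetilde{h(DX)} \cong S^{n-1}, \qquad \partial_\infty Z \cap \partial_\infty Z' = \partial_\infty \widetilde{h(Y)} \cong S^{n-2}.\]

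The main obstacle is the final step: upgrading this decomposition to show that each of $\partial_\infty Z, \partial_\infty Z'$ is a closed $(n-1)$-disk bounded by $\partial_\infty \widetilde{h(Y)}$. I would pass to the full visual compactifications. By the Bestvina-Mess theory of $\mathcal Z$-set compactifications for hyperbolic groups, $\overline{\widetilde{h(DX)}}$ and $\overline{\widetilde{h(Y)}}$ are $\mathcal Z$-compactifications of contractible topological manifolds whose boundaries at infinity are spheres of the expected dimension, hence (using the high-dimensional characterization of $\mathbb D^n$) homeomorphic to $\mathbb D^n$ and $\mathbb D^{n-1}$ respectively. The totally geodesic inclusion $\widetilde{h(Y)} \hookrightarrow \widetilde{h(DX)}$ extends continuously to a proper embedding $\mathbb D^{n-1} \hookrightarrow \mathbb D^n$; since the embedded $\mathbb D^{n-1}$ is the fixed set of the topological involution induced by $\tilde\tau$, it is bicollared in the interior (via the perpendicular geodesic exponential map), and this bicollar propagates to the boundary sphere through the $\mathcal Z$-set compactification structure. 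Brown's generalized Schoenflies theorem then identifies the closures of the two components of $\mathbb D^n \setminus \mathbb D^{n-1}$ as closed half-balls, each intersecting the boundary sphere $S^{n-1}$ in a closed hemisphere $\mathbb D^{n-1}$ with boundary $\partial_\infty \widetilde{h(Y)} \cong S^{n-2}$. These hemispheres are $\partial_\infty Z$ and $\partial_\infty Z'$, and the required identification $\partial(\partial_\infty Z) = \partial_\infty \widetilde{h(Y)}$ is built into the construction. The subtle point one must be careful with is the extension of the interior bicollar to a collar at infinity; the symmetry provided by $\tilde\tau$ is what makes this tractable.
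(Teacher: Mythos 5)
Your approach is genuinely different from the paper's, which analyzes the metric spheres $S_Z(r)$ and the geodesic contraction maps $\rho_r^s$, shows via its Claims 1 and 2 that these are cell-like maps between $(n-1)$-disks with boundary, invokes the cell-like approximation theorems of Siebenmann, Quinn, and Armentrout to conclude they are near-homeomorphisms, and then realizes $(\partial_\infty Z, \partial_\infty(\partial Z))$ as an inverse limit of pairs $(\mathbb{D}^{n-1}, \mathbb{D}^{n-2})$, concluding by a theorem of Brown on inverse limits of near-homeomorphisms. Your route via the doubling involution $\tilde\tau$ and the set-theoretic decomposition $\partial_\infty Z \cup \partial_\infty Z' = S^{n-1}$, $\partial_\infty Z \cap \partial_\infty Z' = S^{n-2}$ is correct as far as it goes, and the functoriality of Charney--Davis hyperbolization does give the claimed involution.

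However, there is a genuine gap at the final and crucial step. To apply the generalized Schoenflies theorem you must know that $\partial_\infty \widetilde{h(Y)} \cong S^{n-2}$ is \emph{flatly} (bicollared) embedded in $\partial_\infty \widetilde{h(DX)} \cong S^{n-1}$. You assert this follows from the interior bicollar (the perpendicular geodesic exponential) ``propagating'' to infinity via the $\mathcal Z$-compactification and from the $\tilde\tau$-symmetry, but neither of these actually delivers flatness. The metric $\epsilon$-tube around the convex totally geodesic $\widetilde{h(Y)}$ in the CAT$(-1)$ space $\widetilde{h(DX)}$ degenerates at infinity: the set of boundary points $\xi$ such that the ray to $\xi$ stays within bounded distance of $\widetilde{h(Y)}$ is exactly $\partial_\infty \widetilde{h(Y)}$ itself, so the tube does not limit onto a neighborhood of $S^{n-2}$ in $S^{n-1}$, only onto $S^{n-2}$. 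And the existence of a topological involution of $S^{n-1}$ fixing this $S^{n-2}$ pointwise does not rule out wildness --- Bing's involution of $S^3$ with wild fixed $S^2$ is the classic counterexample, and higher-dimensional analogues exist. So the sentence ``the symmetry provided by $\tilde\tau$ is what makes this tractable'' names the crux of the matter but does not resolve it. The paper's more laborious metric-sphere argument is precisely the machinery needed to control this potential wildness at infinity; in fact, the flatness you need is essentially \emph{equivalent} to the content of Lemma~\ref{lem:assertion}, so the Schoenflies route is circular unless you supply an independent proof of bicollaring.

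There is also a secondary issue to be careful about: you invoke a ``high-dimensional characterization of $\mathbb{D}^n$'' to identify the compactifications $\overline{\widetilde{h(DX)}}$ and $\overline{\widetilde{h(Y)}}$ with closed disks. This is not automatic from the Bestvina--Mess $\mathcal Z$-set structure alone; one needs, for instance, that $\widetilde{h(DX)} \cong \mathbb{R}^n$ (via simple connectivity at infinity, Stallings/Siebenmann, requiring $n\geq 5$) and then a recognition theorem for the compactification. This can be made to work in the range $n\geq 5$, but since the theorem is stated for $n\geq 4$ you would still need the paper's separate treatment of $n=4$, and in any case it adds nontrivial work you have not supplied.
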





Let us momentarily assume Lemma \ref{lem:assertion} and finish the proof. Form the CAT(-1) space $\widehat M$ by gluing a copy of $Z$ to each boundary component of $\partial \wtil M$, by isometrically identifying the copy of $\wtil {h(Y)}$ inside $Z$ with the boundary component. We have an isometric embedding $\wtil M \hookrightarrow \what M$, inducing an embedding $\partial _\infty \wtil M \hookrightarrow \partial _\infty \what M$. Let $\gamma$ be a lift, in $\wtil M \subset \what M$ of the singular geodesic in $M$, i.e. the geodesic whose link is the homology sphere $H$. The argument in \cite[Proof of Theorem 5c.1(iv), pg.\ 385]{dj} applies verbatim to show that $\partial _\infty \what M -\{\gamma_+, \gamma_-\}$ is not simply-connected. If $\eta$ denotes a homotopically non-trivial loop in $\partial _\infty \what M - \{\gamma_+, \gamma_-\}$, then Lemma \ref{lem:assertion} allows us to use the same argument as in Lemma \ref{fund-grp-Sierpinski} to homotope $\eta$ into the subset $\partial _\infty \wtil M= \partial_\infty G$. We conclude that $\partial _\infty G -\{\gamma_+, \gamma_-\}$ fails to be simply connected. From Lemma \ref{fund-grp-Sierpinski}, we conclude that $\partial _\infty G$ is not homeomorphic to $\scr S^{n-2}$.

The $n=4$ case proceeds similarly, but is modeled instead on \cite[Section (5a)]{dj}. Briefly, one lets $X$ be a $4$-dimensional simplicial complex whose geometric realization is a homology manifold with non-empty boundary $Y$, and which contains a singular point in the interior of $X$ (whose link is, for example, the Poincar\'e homology $3$-sphere $H$). One then looks at the universal cover of the hyperbolization $W= h(X)$. We can ``cap off'' the boundary components of $\wtil W$ as in the last paragraph to obtain $\widehat W$. Then the arguments in \cite[Section 3d]{dj} shows that $\pi_1(\partial _\infty \widehat W)$ is non-trivial. Again, using Lemma \ref{lem:assertion}, we can push a homotopically non-trivial loop in $\partial _\infty \widehat W$ into the subset $\partial _\infty \wtil W = \partial _\infty G$. From  Lemma \ref{fund-grp-Sierpinski},
we conclude that $\partial _\infty G$ is not homeomorphic to $\scr S^{2}$. Finally, even though $W$ is not a manifold, it is homotopy equivalent to a manifold: just remove a small neighborhood of the singular cone point, and replace it by a contractible manifold which bounds $H$. The resulting $4$-manifold $M$ has the desired properties.

\vskip 10pt

\noindent{\bf Step 3 (Reducing Lemma \ref{lem:assertion}).} To complete the proof of the theorem, we are left with verifying Lemma \ref{lem:assertion}. This is again a minor adaptation of the arguments in \cite[Sections 3b, 3c]{dj}. Choose a basepoint $x\in \partial Z$, and consider the closed metric $r$-balls $\ov{B}_Z(r)$, $\ov B_{\partial Z}(r)$ in the spaces $Z$, $\partial Z$, centered at $x$, as well as the metric $r$-spheres $S_Z(r)$ and $S_{\partial Z}(r)$. The proof of Lemma \ref{lem:assertion} will rely on the following:

\vskip 5pt

\noindent \underline{Claim 1:} For all $r$, the metric spheres $S_Z(r)$ are manifolds with boundary $S_{\partial Z}(r)$.

\vskip 5pt

\noindent \underline{Claim 2:} For points $p\in S_{\partial Z}(r)$, the complement $\Lk(p) \setminus B_{\Lk(p)}(v; \pi)$ of the
metric ball of radius $\pi$, centered at $v \in \partial \left(\Lk(p)\right)$ in the link of $p$, is a cell-like set.

\vskip 5pt

From these two Claims, it is easy to conclude. If one takes a small enough $r$, then clearly $S_Z(r)$ is homeomorphic to a disk $\mathbb D^{n-1}$.  In view of Claim 2 and the discussion in \cite[pg.\ 372]{dj}, there is an $\epsilon >0$ such that each of the geodesic contraction maps $\rho_r^s: S_Z(s) \rightarrow S_Z(r)$ is a cell-like map when $r< s< r+\epsilon$. So by Claim 1, the maps $\rho_r^s$ are cell-like maps between manifolds with boundaries. From the work of Siebenmann \cite{sieb}, Quinn \cite{quinn}, and Armentrout \cite{arm} it follows that each $\rho_r^s$ is a \emph{near-homeomorphism} (i.e. can be approximated arbitrarily closely by homeomorphisms), and hence, that all the $S_Z(r)$ are homeomorphic to a disk $\mathbb D^{n-1}$, with boundary $\partial S_Z(r) = S_{\partial Z}(r)$.

Finally, we identify the pair $\left(\partial_\infty Z , \partial _\infty (\partial Z)\right)$ with the inverse limit $\varprojlim \left\{\left(S_Z(r), S_{\partial Z}(r) \right) \right\}_{r>0}$, where the bonding maps are given by the maps $\rho_r^s$ (where $0<r<s$), which we saw are all near-homeomorphisms. Lemma \ref{lem:assertion} now follows by applying the main result of Brown \cite{brown1}. 

This reduces the proof of Lemma \ref{lem:assertion} (and hence also of the theorem) to checking Claim 1 and Claim 2 -- which are the last two steps of the proof.

\vskip 5pt

\noindent {\bf Step 4 (Proof of Claim 1).} We first argue that the ball $B_Z(r)$ of radius $r$ is a manifold with boundary. It is clear that points $p \in \text{Int}(\wtil M)$ at distance $<r$ from the basepoint have manifold neighborhoods. It is also immediate that points $p\in \partial \wtil M$ at distance $<r$ from the basepoint have neighborhoods homeomorphic to $\mathbb R^{n-1}\times \mathbb R_+$. Points at distance $=r$ from the basepoint are either in $\text{Int}(\wtil M)$ or on $\partial \wtil M$. 
For points $p$ in $\text{Int}(\wtil M)$, the argument in \cite[pg.\ 372]{dj} shows that $p$ has a neighborhood homeomorphic to $\mathbb R^{n-1} \times \mathbb R_+$. So the only possible points to worry about are points at distance $=r$, and lying on the subset $\partial \wtil M$. But for such a point $p$, a similar argument works with no trouble. Let $v$ be the point in $\Lk(p)$ pointing from $p$ to the basepoint $x$, and consider the closed ball $\overline{B}_{\Lk(p)}(v; \pi/2)$ in the link of $p$, centered at $v$, of radius $\pi/2$. For any vector $w\in \overline{B}_{\Lk(p)}(v; \pi/2)$, one can look at the geodesic $\gamma_w$ emanating from $p$, in the direction $w$ ($\gamma_w$ is well-defined close to $p$). If the direction $w$ is at distance $<\pi/2$ from $v$, then for a small interval 
of time $[0,s(w)]$, the geodesic $\gamma_w$ lies entirely in $B_Z(r)$, with $\gamma_w\left(s(w) \right)\in S_Z(r)\cup B_{\pa Z}(r)$. Note that $s$ varies continuously and $s(w)\to 0$ as $w\to S_{\Lk(p)}(v; \pi/2)$.
It follows that $p$ has a neighborhood homeomorphic to the set $\hat X$ constructed as follows: take the product $I\times \overline{B}_{\Lk(p)}(v; \pi/2)$, collapse the fibers over the subset $S_{\Lk(p)}(v; \pi/2)$ to $0$, and then collapse the subset $\{0\} \times  \overline{B}_{\Lk(p)}(v; \pi/2)$ to a single point (which is identified with $p$) -- see Figure \ref{fig:claim1}. By an inductive argument (note that $\dim (\Lk(p))=\dim (\wtil M)-1$) one can assume that $\overline{B}_{\Lk(p)}(v; \pi/2)$ is homeomorphic to a disk $\mathbb D^{n-1}$, with the subset $S_{\Lk(p)}(v; \pi/2)$ corresponding to an embedded $\mathbb D^{n-2}$ inside $\partial \mathbb D^{n-1} \cong S^{n-2}$. Following the construction of $\hat X$ given above, we see that $\hat X$ is homeomorphic to $\mathbb D^{n}$, with the point corresponding to $p$ lying on $\partial \mathbb D^n$. This shows that $B_Z(r)$ is indeed a manifold with boundary, and that the boundary of $B_Z(r)$ naturally decomposes as the union of $S_Z(r) \cup B_{\partial Z}(r)$, where the union is over the common subset $S_{\partial Z}(r)$. 

Finally, we check that $S_Z(r)$ is an $(n-1)$-manifold with boundary. For points $p\in S_Z(r)$ lying in $\text{Int}(\wtil M)$, it follows easily from \cite[pg.\ 372]{dj} that these points have neighborhoods homeomorphic to $\mathbb D^{n-1}$ with $p$ lying as an interior point. In the case where $p \in S_Z(r)$ lies on $\partial \wtil M$, we look at the neighborhood $\hat X$ of $p$ constructed above. Within $\hat X$, the subset corresponding to $S_Z(r)$ consists of (the image of) a small neighborhood $U$ of $\{1\} \times S_{\Lk(p)}(v; \pi/2) \cong \mathbb D^{n-2}$ inside the slice $\{1\} \times \overline{B}_{\Lk(p)}(v; \pi/2)\cong \mathbb D^{n-1}$. Note that the $(n-2)$-disk $S_{\Lk(p)}(v; \pi/2)$ lies in the boundary sphere of the $(n-1)$-disk $ \overline{B}_{\Lk(p)}(v; \pi/2)$ (by induction). The image of $U$ thus gives a copy of $\mathbb D^{n-1}$, with $p$ lying in the boundary of $\mathbb D^{n-1}$. Moreover, the subset of $U$ corresponding to $S_{\partial Z}(r)$ is just a neighborhood of $p$ inside the boundary sphere of $\mathbb D^{n-1}$, i.e. is homeomorphic to $\mathbb D^{n-2}$. This completes the argument for Claim 1.

\begin{figure}[h!] 
\labellist 
\small\hair 2pt 
\pinlabel $v$ at -15 485
\pinlabel $B_{L}(v;\pi/2)$ at -85 550 
\pinlabel $S_{L}(v;\pi/2)$ at 400 640
\pinlabel $\{0\}\ti\bb D^{n-1}$ at 585 460
\pinlabel $\{1\}\ti\bb D^{n-1}$ at 585 700
\pinlabel $U$ at 1040 700
\endlabellist 
\includegraphics[scale=0.3]{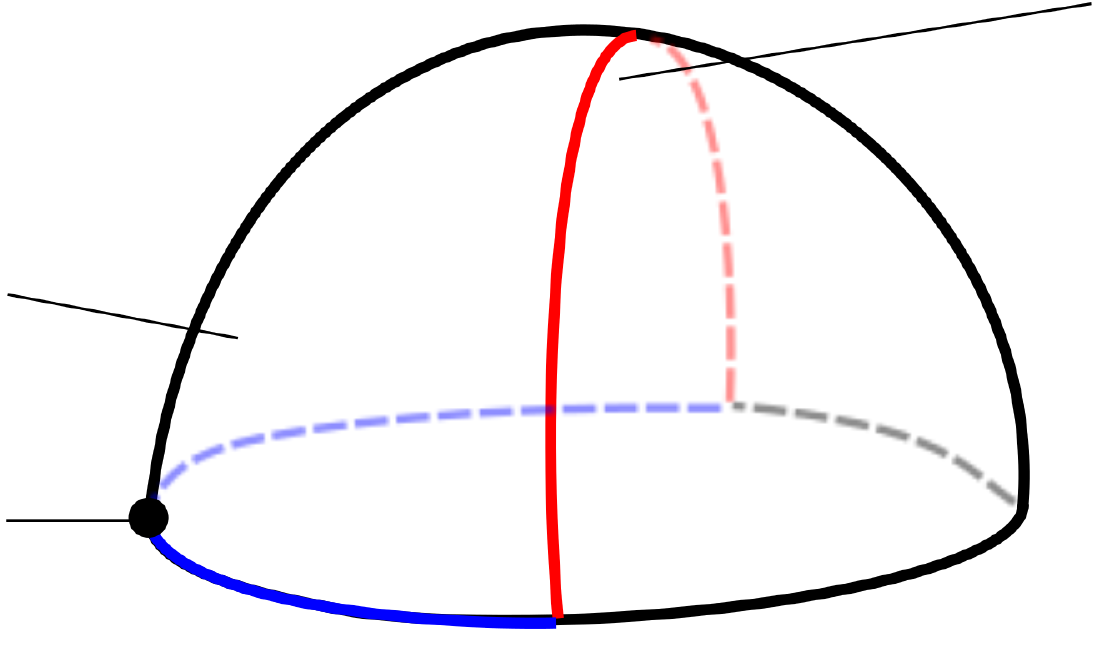} \hspace{1.35in}
\includegraphics[scale=0.3]{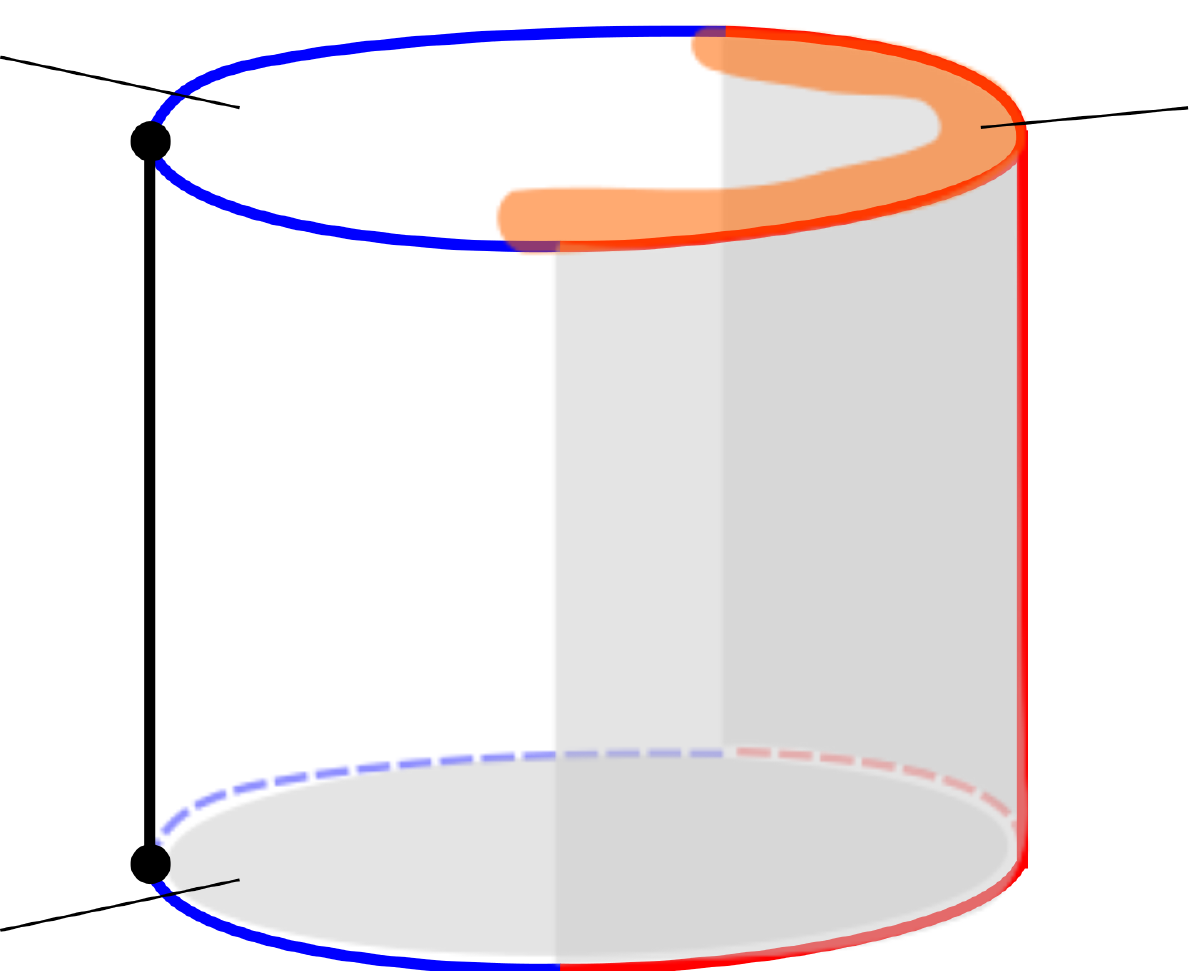} 
\caption{Left: The link $L=\Lk(p)$. Right: The space $I\ti\ov{B}_{\Lk(p)}(v;\pi/2)$, which is identified with a neighborhood $\hat X$ of $p$ after quotienting by the gray region.}  
\label{fig:claim1}
\end{figure} 

\vskip 10pt

\noindent {\bf Step 5 (Proof of Claim 2).} We want to show that the complement $\Lk(p) \setminus B_{\Lk(p)}(v; \pi)$  is cell-like. The set $\Lk(p)$ is homeomorphic to a disk $\mathbb D^{n-1}$, so we can think of the set we are interested in as lying within the double $D\left(\Lk(p)\right) \cong S^{n-1}$. Given an $r\in (0, \pi)$, consider the subset $U_r\subset D\left(\Lk(p)\right) \cong S^{n-1}$ defined to be the union of $D\left(\Lk(p)\right) \setminus \Lk(p)$ and the set $B_{\Lk(p)}(v; r)$. See Figure \ref{fig:link}. We will show each such $U_r$ is homeomorphic to $\mathbb R^{n-1}$. Then by a result of Brown \cite{brown2} it follows that the union $U_\infty :=\bigcup_{r\in (0, \pi)}U_r \subset D\left(\Lk(p)\right) \cong S^{n-1}$ is also homeomorphic to $\mathbb R^{n-1}$. But if a subset of $S^{n-1}$ is homeomorphic to $\mathbb R^{n-1}$, its complement is automatically cell-like. Since the complement of $U_\infty$ coincides with $\Lk(p) \setminus B_{\Lk(p)}(v; \pi)$, this would establish Claim 2. 

To see that each $U_r$ is homeomorphic to $\mathbb R^{n-1}$, we consider their closures $\ov U_r$. We have that $U_r=\text{Int}(\ov U_r)$, and that $\ov U_r$ can be written as the union of a copy of $\Lk(p)$ along with $\overline{B}_{\Lk(p)}(v; r)$, where the union is taken over the common subset $\overline{B}_{\partial \Lk(p)}(v; r)$. Let us analyze the two pieces in this decomposition.

On one of the sides, the subset $\Lk(p)$ is homeomorphic to $\mathbb D^{n-1}$, and the common subset $\overline{B}_{\partial \Lk(p)}(v; r)$ is homeomorphic to an embedded $(n-2)$-disk $\mathbb D^{n-2}$ inside the boundary sphere $\partial \Lk(p) \cong S^{n-2}$. Note that, by varying the parameter $r$, we see that 
\[S^{n-3}\simeq\pa\ov B_{\pa\Lk(p)}(v;r)\sbs\pa\Lk(p)\simeq S^{n-2}\]
is bicollared. On the other side, the subset $\overline{B}_{\Lk(p)}(v; r)$ is also homeomorphic to $\mathbb D^{n-1}$, and the gluing disk $\mathbb D^{n-2} \cong \overline{B}_{\partial \Lk(p)}(v; r)$ inside the boundary sphere $S^{n-2}\cong \partial \overline{B}_{\Lk(p)}(v; r)$ also has complement a disk (by the argument in Claim 1). Thus, we see that $\ov U_r$ is obtained by gluing together two closed $(n-1)$-disks, by identifying together two copies of an $(n-2)$-disk, where each copy is nicely embedded in the respective boundary spheres $S^{n-2}\cong \mathbb D^{n-1}$. It follows that $\ov U_r$ is also homeomorphic to $\mathbb D^{n-1}$. This completes the proof of Claim 2 and the proof of the theorem.
\end{proof} 

\begin{figure}[h!] 
\labellist 
\small\hair 2pt 
\pinlabel $v$ at -5 50
\pinlabel $B_L(v;r)$ at -40 115 
\pinlabel $L\setminus B_L(v;\pi)$  at 230 90 
\pinlabel $\ov{B}_{\pa L}(v;r)$ at 295 40
\pinlabel $\ov{B}_L(v;r)$ at 295 140
\pinlabel $\pa\>\ov{B}_{\pa L}(v;r)$ at 540 40
\pinlabel $DL\bs L$ at 515 8
\endlabellist 
\includegraphics[scale=0.5]{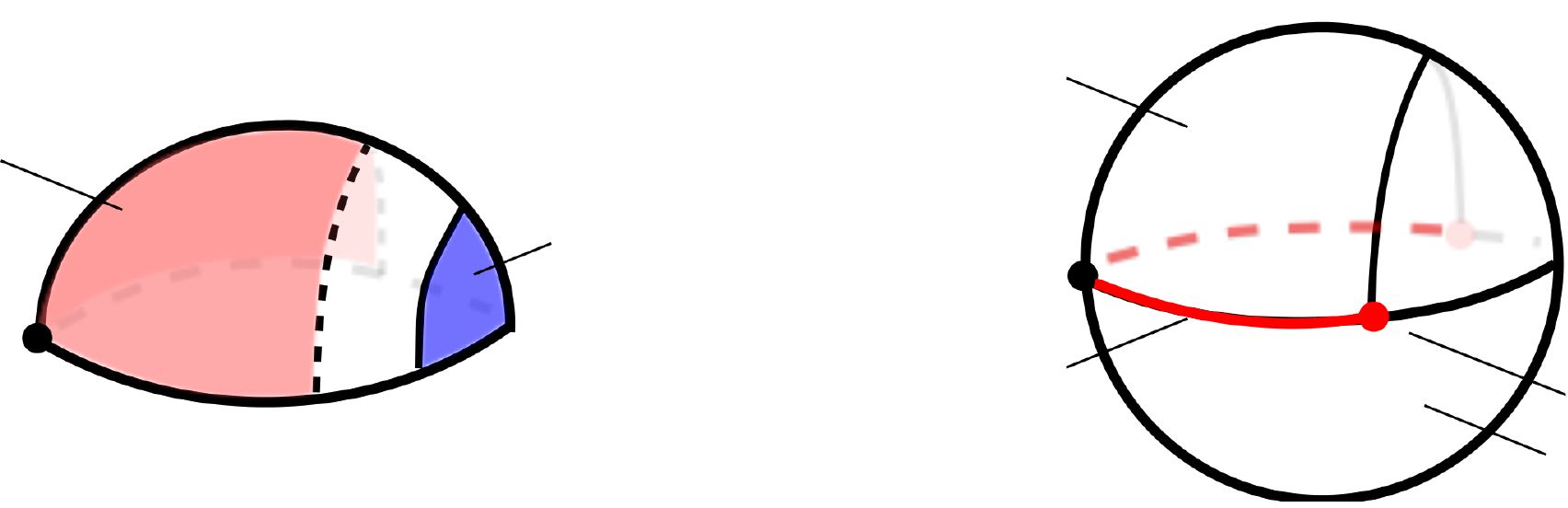} 
\caption{The link $L=\Lk(p)$ and its double $DL$. }  
\label{fig:link}
\end{figure}

\begin{rmk}
Let us make a small comment on approximating cell-like maps by homeomorphisms, in the case of manifolds with boundary. The attentive reader will probably notice that, in Siebenmann's work \cite{sieb}, there are two cases that require special care. In the $5$-dimensional case, he requires the restriction of the map to the boundary to be a homeomorphism (rather than just a cell-like map). This is due to the fact that, at the time \cite{sieb} was written, it was unclear whether or not cell-like maps of (closed) $4$-manifolds could be approximated by homeomorphisms---hence the need of a stronger hypothesis on the boundary map. In view of Quinn's subsequent proof of the $4$-dimensional case \cite{quinn}, this stronger hypothesis is no longer needed in the $5$-dimensional boundary case. Note that, in our context, the bonding maps, when restricted to the boundary, are always cell-like (but are not homeomorphisms). 

The other special case has to do with $3$-dimensions. Here there is an added hypothesis that every point pre-image has a neighborhood $N$ which isn't just contractible, but in addition is prime (i.e. if $N= M_1 \# M_2$, then one of the $M_i$ is a 
standard $3$-sphere). The only way this could fail is if one of the $M_i$ were instead a homotopy $3$-sphere -- but by Perelman's resolution of the Poincar\'e Conjecture, such a manifold is automatically $S^3$. So again, in the $3$-dimensional case, this additional hypothesis is now unnecessary.
\end{rmk}

\section{Remarks on CAT(0) groups}\label{sec:CAT0}

In this section we remark on generalizing the main result from hyperbolic groups to CAT(0) groups.
A proper geodesic space $X$ is called \emph{CAT(0)} if geodesic triangles in $X$ are at least as thin as triangles in Euclidean space \cite{bridson+haefliger}. A group $G$ is called \emph{CAT(0)} if there exists a CAT(0) space $X$ on which $G$ acts \emph{geometrically} (that is, isometrically, properly, and compactly). 

A CAT(0) space $X$ has a visual boundary $\pa_\infty X$, and if $G$ acts geometrically on $X$, then $G$ acts on $\pa_\infty X$ by homeomorphisms. In this case $\pa_\infty X$ is called \emph{a} boundary of $G$. With this terminology we have the following theorem. 

\begin{thm}\label{thm:cat0sphere}
Let $G$ be a CAT(0) group for which $S^{n-1}$ is a boundary. If $n\ge6$, then there exists a closed $n$-dimensional aspherical manifold $W$ such that $\pi_1(W)\simeq G$.
\end{thm}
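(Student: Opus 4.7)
The plan is to mimic the three-step argument used for Theorem~\ref{thm:main}, but in the \emph{closed}-manifold setting, substituting the Farrell-Jones conjecture for CAT(0) groups (Bartels-L\"uck) in place of its hyperbolic counterpart (Theorem~\ref{thm:blr}), and the Bestvina-Mess theorem applied to the visual Z-compactification of a CAT(0) space in place of the Kapovich-Kleiner input that was used for Poincar\'e duality.

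First I would establish that $G$ is a Poincar\'e duality group of dimension $n$. Fix a CAT(0) space $X$ on which $G$ acts geometrically with $\pa_\infty X \cong S^{n-1}$. Since proper CAT(0) spaces admit Z-set compactifications via the visual boundary (classical; cf.\ Geoghegan's book or Bestvina's work on Z-structures), the Bestvina-Mess criterion applies and forces $G$ to be a PD($n$) group, so that $BG$ is a CW-PD($n$) complex in the sense of Wall. Because $G$ acts geometrically on the contractible CAT(0) space $X$, the quotient $X/G$ is already a finite $K(G,1)$, so $BG$ is homotopy equivalent to a finite CW complex $K$---no Wall finiteness obstruction needs to be addressed.

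Next I would feed $K$ into Ranicki's total surgery obstruction machinery, exactly as in Step~3 of Theorem~\ref{thm:main}. The obstruction $s(K)\in\bb S_n(K)$ vanishes if and only if $K$ is homotopy equivalent to a closed $n$-manifold, which would produce the desired $W$. I would first kill the image of $s(K)$ in $\ov{\bb S}_n(K)$ using Ranicki's identification $\ov{\bb S}_n(K)=\bb S_n(\z,K)$ (valid since $n\ge 6$) together with the vanishing $\bb S_*(\z,K)=0$, which is formal once one has the Farrell-Jones conjecture in both $K$- and $L$-theory for the group---this last ingredient is the Bartels-L\"uck theorem for CAT(0) groups, replacing Theorem~\ref{thm:blr}. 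I would then use the comparison sequence
\[H_n(K;L_0(\z))\ra\bb S_n(K)\ra\ov{\bb S}_n(K)\]
to analyze the lift of $s(K)$ to $\bb S_n(K)$ itself.

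The main obstacle is this last step of the surgery argument in the closed case. In Theorem~\ref{thm:main}, $H_n(K;\z)$ vanished because $(K,N)$ was a PD pair with nonempty boundary, which killed $\bb S_n(K)$ outright via the comparison sequence. In the present closed case $H_n(K;\z)\cong\z$ is generated by the fundamental class, so the comparison sequence alone is insufficient; one must show instead that the image of $H_n(K;L_0(\z))\ra\bb S_n(K)$ contributes no genuine obstruction to $s(K)$. This is precisely the more delicate closed-manifold analysis carried out in \cite{blw} for hyperbolic groups, and I expect it to port over verbatim to the CAT(0) setting once the Bartels-L\"uck version of Farrell-Jones is in hand, yielding a closed aspherical manifold $W$ with $\pi_1(W)\cong G$.
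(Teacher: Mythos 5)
Your proposal diverges from the paper's proof at exactly the point you flag as uncertain, and that gap is the heart of the matter. The paper does not run Ranicki's total surgery obstruction sequence directly on $K$. Instead it routes through ANR homology manifolds: Step~1 cites \cite[Theorem~1.2]{blw} to conclude that $BG$ is homotopy equivalent to a closed aspherical \emph{homology} $n$-manifold $W$ with the disjoint disk property (this is what the Farrell--Jones input buys you, and this is where the surgery-spectrum calculation lives); Step~2 compactifies $\wtil W$ by adjoining $\pa_\infty X \cong S^{n-1}$ as a $Z$-set to obtain a homology manifold with boundary whose boundary is a genuine manifold; and Step~3 invokes the locality of Quinn's resolution obstruction to conclude that $\operatorname{int}(N)=\wtil W$ is a manifold. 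The obstruction you are worried about --- the image of $H_n(K;L_0(\z))\cong\z$ in $\bb S_n(K)$, which in Theorem~\ref{thm:main} died for free because $H_n(K;\z)=0$ --- is precisely Quinn's resolution obstruction in disguise, and it is \emph{not} killed by a formal Farrell--Jones argument; it is killed by the geometric fact that the visual boundary of $X$ is the actual sphere $S^{n-1}$, hence a manifold, hence has trivial local index, which propagates inward. So when you write that the ``more delicate closed-manifold analysis carried out in \cite{blw}'' should ``port over verbatim,'' you are correct that it does, but that analysis is the $Z$-set compactification and Quinn-invariant argument, not a refinement of the surgery exact sequence computation; your proposal as written does not contain it.

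Two smaller remarks. First, your claim that ``$X/G$ is already a finite $K(G,1)$'' implicitly assumes $G$ is torsion-free (a proper cocompact action need not be free) and, even then, $X/G$ is only a compact ANR rather than visibly a finite CW complex; the paper sidesteps this by citing L\"uck's construction of a finite CW model for $BG$ for groups acting geometrically on CAT(0) spaces, and you should do the same or invoke West's theorem. Second, the Poincar\'e duality step is fine: your Bestvina--Mess route is essentially identical to the paper's computation $H^i(G;\z G)\cong H_c^i(X)\cong\wtil H^{i-1}(\pa_\infty X)$ using Bestvina's $Z$-structure machinery.
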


The proof is almost identical to the proof of Theorem \ref{thm:blw} in \cite{blw}. We give a short explanation for how to extend that argument to the CAT(0) case. 

\begin{proof}[Proof of Theorem \ref{thm:cat0sphere}] By assumption $G$ acts geometrically on an $X$ with $\pa_\infty X=S^{n-1}$. Denote $\ov X=X\cup\pa_\infty X$. We proceed in three steps. 

{\it Step 1.} $BG$ is homotopy equivalent to a closed aspherical homology $n$-manifold $W$ such that $W$ has the disjoint disk property. To show this, it suffices to show that $G$ is a PD$(n)$ group and to note that CAT(0) groups satisfy the Farrell-Jones conjectures in $K$- and $L$-theory. For then we may use \cite[Theorem 1.2]{blw}, which says that for such a group, $BG$ is homotopy equivalent to a closed aspherical homology $n$-manifold $M$ with the disjoint disk property. 

We explain why $G$ is PD$(n)$ group. First, we know $G$ is of type FP once we know that there exists a finite CW complex $K\simeq BG$ (for then the cellular chain complex of the universal cover $\wtil K$ is a finite length resolution of $\z$ by finitely generated free $G$ modules). A finite CW complex $K\simeq BG$ for a group $G$ that acts geometrically on a proper CAT(0) space is shown to exist by L\"uck \cite{luck_cat0model}. Now $G$ is a $PD(n)$ group because 
\[H^i(G;\z G)\cong H^i_c(X)\cong \wtil H^{i-1}(\pa_\infty X)=\wtil H^{i-1}(S^{n-1})=\left\{\begin{array}{ll}\z&\text{if }i=n\\0&\text{else}\end{array}\right.\]
The first two isomorphisms are described by Bestvina \cite{bestvina_localhomology}. That this implies $G$ is a PD($n)$ group is explained in \cite[VIII.10.1]{brown}. 

{\it Step 2.} The universal cover $\wtil W$ can be compactified $N=\wtil W\cup\pa_\infty X$ such that $N$ is a homology manifold with boundary. To show that $N$ is a homology manifold with boundary it suffices to show that $N$ is a finite-dimensional locally compact ANR and $\pa_\infty X$ is a $Z$-set in $N$ (see \cite[Proposition 2.5]{blw}). The pair $(\ov X,\pa_\infty X)$ is a $Z$-structure on $G$ by Bestvina \cite[Example 1.2(ii)]{bestvina_localhomology}. Furthermore, by \cite[Lemma 1.4]{bestvina_localhomology} for any other finite model $K$ for $BG$, there is a natural $Z$-structure on $(\ov K,\pa_\infty X)$, where $\ov K= K\cup\pa_\infty X$. Thus $(N,\pa_\infty X)$ admits a $Z$-set structure; in particular, $N$ is a Euclidean retract, finite dimensional, and $S^{n-1}$ is a $Z$-set inside $N$. 

{\it Step 3.} $\wtil W$ (and hence also $W$) is a manifold. This part of the argument is identical to that given in \cite[Theorem A]{blw}.  Quinn's invariant allows one to recognize manifolds among homology manifolds with the disjoint disk property. By the local nature of Quinn's invariant, if $(B,\pa B)$ is a homology manifold with boundary and $\pa B$ is a manifold, then int$(B)$ is a manifold. 
\end{proof}

In light of this result and Theorem \ref{thm:main} above, it is natural to ask the following question.

\begin{qu*}
Let $G$ be a CAT(0) group which admits $\scr S^{n-2}$ as a boundary. Is $G$ the fundamental group of an $n$-dimensional aspherical manifold with boundary? 
\end{qu*}

Examples of $G$ satisfying the hypothesis of this Question are given by Ruane \cite{ruane}: every nonuniform lattice $\Ga\le\SO(n,1)$ is an example. For these examples, an aspherical manifold with boundary can be obtained by ``truncating the cusps'' of $\mathbb H^n/\Ga$.

There are some basic problems with answering this Question with the techniques of this paper. For example, it is not obvious that peripheral subgroups of a CAT(0) group with Sierpinski space boundary are CAT(0), or that the double of a CAT(0) group along peripheral subgroups is CAT(0). 

\bibliographystyle{plain}
\bibliography{tpbib}

\end{document}